\theoremstyle{plain}
\newtheorem{thm}{Theorem}[section]
\newtheorem*{thm*}{Theorem}
\newtheorem*{cor*}{Corollary}
\newtheorem{prop}[thm]{Proposition}
\newtheorem{lem}[thm]{Lemma}
\newtheorem{cor}[thm]{Corollary}
\newtheorem*{claim*}{Claim}
\theoremstyle{definition}
\newtheorem{defn}[thm]{Definition}
\newtheorem{ex}[thm]{Example}
\theoremstyle{remark}
\numberwithin{equation}{thm}
\def\a{\mathfrak a}
\def\e{\mathrm{e}}
\def\m{\mathfrak m}
\def\p{\mathfrak p}
\def\q{\mathfrak q}
\def\N{\Bbb N}
\def\Z{\Bbb Z}
\newcommand{\ov}{\overline}
\def\depth{\mathrm{depth}}
\def\Supp{\mathrm{Supp}}
\def\Ann{\mathrm{Ann}}
\def\Ass{\mathrm{Ass}}
\def\height{\mathrm{ht}}
\def\Spec{\mathrm{Spec}}
\def\R{{\mathcal R}}
\newcommand{\mapright}[1]{%
\smash{\mathop{%
\hbox to 1cm{\rightarrowfill}}\limits^{#1}}}
\newcommand{\mapleft}[1]{%
\smash{\mathop{%
\hbox to 1cm{\leftarrowfill}}\limits_{#1}}}
\begin{document}

 \setlength{\baselineskip}{14pt}
\title[Sally module and the first normal Hilbert coefficient]{A filtration of the Sally module and the First normal Hilbert Coefficient}

\author[Masuti]{Shreedevi K. Masuti}
\address{Dipartimento di Matematica, Universit{\`a} di Genova, Via Dodecaneso 35, 16146 Genova, Italy}
\email{masuti@dima.unige.it, masuti.shree@gmail.com}

\author[Ozeki]{Kazuho Ozeki}
\address{Department of Mathematical Sciences, Faculty of Science, Yamaguchi University, 1677-1 Yoshida, Yamaguchi 753-8512, Japan}
\email{ozeki@yamaguchi-u.ac.jp}

\author[Rossi]{Maria Evelina Rossi}
\address{Dipartimento di Matematica, Universit{\`a} di Genova, Via Dodecaneso 35, 16146 Genova, Italy}
\email{rossim@dima.unige.it}

\date{\today}
\thanks{SKM was supported by INdAM COFOUND Fellowships cofounded by Marie Curie actions, Italy. 
KO was partially supported by Grant-in-Aid for Scientific Researches (C) in Japan (15K04820). MER was partially supported by PRIN 2015EYPTSB-008 Geometry of Algebraic Varieties.}
\keywords{Cohen-Macaulay local ring, associated graded ring, normal Hilbert coefficients, Sally Module}
\subjclass[2010]{13D40, 13A30, 13H10}
\maketitle
\begin{abstract}
The Sally module of an ideal is an important tool to interplay between Hilbert coefficients and the properties of the associated graded ring. In this paper we give new insights on the structure of the Sally module. We apply these results characterizing 
the almost minimal value of the first Hilbert coefficient in the case of the normal filtration in an analytically unramified Cohen-Macaulay local ring.  
\end{abstract}

{\footnotesize
}


\section{Introduction}
The Hilbert coefficients of a primary ideal in a Noetherian local ring are  important numerical invariants associated to an ideal.   Among them,   the multiplicity and the first Hilbert coefficient (also called Chern number) have been extensively studied,   due to their geometric meaning.   Even if the Hilbert coefficients  give asymptotic information, very often they determine the properties of the ring itself. For instance, if the ring is Cohen-Macaulay, then it is regular if and only if the multiplicity of the maximal ideal is one. Also, a conjecture stated by W. V. Vasconcelos and proved recently  by several authors (see  \cite{GGHOPV}) says that, if the ring is unmixed, then it is Cohen-Macaulay once the first Hilbert coefficient vanishes for some parameter ideal. 

  It happens that  blow-up algebras associated to an ideal  have good  homological properties if  the corresponding Hilbert coefficients achieve ``extremal" values with respect to some bounds.  This is one of the reasons for which there is a lot of interest in finding   sharp bounds on the Hilbert coefficients. This is also the line  traced  by   C. Huneke   and   J. Sally  in their  work (see for instance \cite{H87}, \cite{HH99}, \cite{Sa77}). For a general  overview  see \cite{RV10}. 
\vskip 2mm

Because of geometric reasons,  normal Hilbert coefficients 
are even more significant. Let $(R,\m)$ be an analytically unramified Cohen-Macaulay local ring of dimension $d>0$ with infinite residue field $R/\m$ and $I$ an $\m$-primary ideal of $R$.
Let $\overline{I}$ denote the integral closure of $I$. Consider the so called {\it{normal filtration }}  $\{\overline{I^n}\}_{n \in \Z} $ and we are interested in the corresponding  Hilbert-Samuel polynomial.  
It is well-known that there are integers $\ov{\e}_i(I)$, called   the {\it normal Hilbert coefficients} of $I, $ such that for $n \gg 0$
\begin{eqnarray*}
 \ell_R(R/\ov{I^{n+1}})=\ov{\e}_0(I) \binom{n+d}{d}-\ov{\e}_1(I) \binom{n+d-1}{d-1}+\cdots+(-1)^d \ov{\e}_d(I).
\end{eqnarray*}
 Here $\ell_R(N)$ denotes, for an $R$-module $N$, the length of $N$.
Since $R/\m$ is infinite there exists a minimal reduction $J=(a_1,\ldots,a_d)$ of $I $ and, under our assumptions,  there exists a positive  integer $r$\ such that $\ov{I^{n+1}}=J\ov{I^n}$ for $n \ge r.$ 

\noindent We set $\ov{\rm{r}}_J(I):=\min \{r \geq 0 \ | \ \mbox{$\ov{I^{n+1}}=J\ov{I^n}$ for all $n \geq r$} \}$ the reduction number of $I.$
\vskip 2mm

We recall that $ \ov{\e}_1(I) \geq 0, $ but the bound can be more precise.  It is well known that 
$$
\ov{\e}_1(I) \geq \ov{\e}_0(I)-\ell_R(R/\ov{I}).
$$
 Moreover, the equality  holds true if and only if $\ov{I^{n+1}}=J^n\ov{I}$ for every $n \geq 0$ and for every minimal reduction $J$ of $I$ (that is $\ov{\rm{r}}_J(I) \leq 1$) $($\cite{N60, H87, Itoh92, O87}$)$. In this case the normal associated graded ring $\ov{G}(I):=\oplus_{n \geq 0} \ov{I^n}/\ov{I^{n+1}}$ of $I$ is Cohen-Macaulay.
Recently \cite{CPR16} showed that if the equality $\ov{\e}_1(I) = \ov{\e}_0(I)-\ell_R(R/\ov{I})+1$ holds true, then $\depth ~\ov{G}(I) \geq d-1$ $($see also Corollary \ref{CPR}$)$.
This equality was explored by Phuong \cite{Phu15} in the case $R$ is generalized Cohen-Macaulay. 
\vskip 2mm

By \cite{EV91, GR98, Itoh92, HM97} it is known that  
$$ \ov{\e}_1(I) \geq \ov{\e}_0(I) -\ell_R(R/\ov{I}) + \ell_R(\ov{I^2}/J\ov{I})$$
 and the equality  holds true if and only if $\ov{I^{n+1}}=J^{n-1}\ov{I^2}$ for every $n \geq 1$ (that is $\ov{\rm{r}}_J(I) \leq 2$). 
In this case the normal associated graded ring $\ov{G}(I)$ of $I$ is Cohen-Macaulay $($see also Corollary \ref{EV}$)$. We notice   that     $\ell_R(\ov{I^2}/J\ov{I})$ does not depend on a minimal reduction $J$ of $I  $  (see for instance \cite{RV10}). 

Thus the ideals $I$ with $\ov{\e}_1(I) = \ov{\e}_0(I)-\ell_R(R/\ov{I})+ \ell_R(\ov{I^2}/J\ov{I})$ enjoy nice properties and it seems natural to ask when the equality  $\ov{\e}_1(I)=\ov{\e}_0(I)-\ell_R(R/\ov{I})+\ell_R(\ov{I^2}/J \ov{I})+1$ holds. The main purpose of this paper is to explore this equality which is the content of Section 3. We present in this case the structure of the Sally module and, in particular, we prove that   $\depth ~\ov{G}(I) \geq d-1$ (Theorem \ref{maintheorem}). We remark that if $I$ is integrally closed, the corresponding equality was studied in \cite{OR16}. In \cite{OR16} the authors proved that if the equality $\e_1(I)=\e_0(I)-\ell_R(R/I)+\ell_R(I^2/JI)+1$ holds true, then  the depth of the associated graded ring $G(I):=\bigoplus_{n\geq 0} I^n/I^{n+1}$  can be any integer between $0$ and $d-1$ (see \cite[Theorem 5.1]{OR16}). This ``bad" behavior motives our study in the case of the normal filtration proving that it  enjoys nice properties as compared to the $I$-adic filtration. 

\vskip 2mm

As the title outlines, an important tool in this  paper is  the Sally module introduced by  W. V. Vasconcelos  \cite{V94}.   The aim of the author   was to define a module in between the associated graded ring and the Rees algebra taking care of important information coming from a minimal reduction.   Actually,   a more detailed information comes from the graded parts of a suitable filtration $\{C^{(i)} \}$ of the Sally module that was introduced by M. Vaz Pinto in \cite{VP96}.   In Section 2  we prove   some important results  on $C^{(2)} $  which will be key ingredients  for proving the main result. Some of them are stated in a very general setting. Our hope is that these will be successfully applied to give new insights  to   problems  related to the normal Hilbert coefficients,    for instance \cite{Itoh92}.     In the last section we deduce  some   consequences of Theorem \ref{maintheorem} which include  the already quoted result of A. Corso, C. Polini and M. E. Rossi \cite[Theorem 2.5]{CPR16}. 

\section{Filtering the Sally module}
In this section we study the Sally module associated to any $I$-admissible filtration $\mathcal{I}. $
Following M. Vaz Pinto \cite{VP96} we introduce a filtration of the Sally module, $C^{(l)}(\mathcal{I})$ for $l \geq 1.$ This approach is extremely useful for relating the properties of the Hilbert coefficients and the  graded module associated to  an $I$-admissible filtration $\mathcal{I}, $ as evidenced in \cite{CPR16}, \cite{OR16}.  In \cite{CPR16} the authors analyzed the Sally module (=$C^{(1)}(\mathcal{I})$) of the normal  filtration to study the equality $\bar{\e}_1(I)=\bar{\e}_0(I)-\ell_R(R/\ov{I})+1.$ In order to investigate the equality $\bar{\e}_1(I)=\bar{\e}_0(I)-\ell_R(R/\ov{I})+\ell_R(\ov{I^2}/J \ov{I})+1, $ in this section we prove  
some important properties of $C^{(2)}(\mathcal{I}). $  These properties will play an important role in proving our main result in the next section. 

\vskip 2mm
We recall that $C^{(2)}(\mathcal{I})$ has been studied in \cite{OR16} for $I$-adic filtration. For our purpose we need more deep results.   
\vskip 2mm

Throughout this section, let $(R,\m)$ be a Cohen-Macaulay local ring (not necessarily analytically unramified) and $I$ an $\m$-primary ideal in $R.$ Recall that a {\it a filtration} of ideals $\mathcal{I}:=\{I_n\}_{n \in \mathbb{Z}}$ is a chain of ideals in $R$ such that $R=I_0$ and $I_n \supseteq I_{n+1}$ for all $n \in \mathbb{Z}$.
We say that a filtration $\mathcal{I}$ is {\it $I$-admissible} if for all $m,n \in \mathbb{Z},$ $I_m \cdot I_n \subseteq I_{m+n}, ~I^n \subseteq I_n$ and there exists $k \in \mathbb{N}$ such that $I_{n} \subseteq I^{n-k}$ for all $n \in \mathbb{Z}.$ It is well known that  if $R$ is analytically unramified, then $\{\ov{I^n}\}_{n \in \mathbb{Z}}$ is an $I$-admissible filtration.
\vskip 2mm

For an $I$-admissible filtration $\mathcal I=\{I_n\}_{n \in \mathbb{N}},$ let 
$$
\R(\mathcal I)=\sum_{i\geq 0}I^it^i \subseteq R[t], \ \ \ \R'(\mathcal I)=\sum_{i \in \mathbb{Z}}I^it^i \subseteq R[t,t^{-1}], \ \ \mbox{and} \ \ G(\mathcal I)=\R'(\mathcal I)/t^{-1}\R'(\mathcal I) 
$$
denote, respectively, the Rees algebra, the extended Rees algebra, and the associated graded ring of $\mathcal{I}$ where $t$ is an indeterminate over $R$. 
We set
$$\ov{\mathcal{R}}(I) := \sum_{n \geq 0}\ov{I^n}t^n  \ \subseteq R[t], \ \ ~~~~\ov{\mathcal{R'}}(I) := \sum_{n \in \mathbb{Z}}\ov{I^n}t^n \ \subseteq R[t,t^{-1}], \ \
\operatorname{ and } 
\ov{G}(I):= \ov{\mathcal{R'}}(I)/t^{-1}\ov{\mathcal{R'}}(I) 
$$
for the Rees algebra, the extended Rees algebra and the associated graded ring of $\{\ov{I^n}\}_{n \in \Z},$ respectively.

Since $R/\m$ is infinite there exists a minimal reduction $J=(a_1,a_2,\ldots,a_d)$ of $\mathcal{I},$ that is there exists an integer $r \in \mathbb{Z}$ such that the equality $I_{n+1}=JI_n$ holds true for all $n \geq r$.
Let 
\[
\rm{r}_J(\mathcal I):=\min\{r \geq 0 \ | \ \mbox{$I_{n+1}=JI_n$ hold true for all $n \geq $ r } \}
\]
be the {\it reduction number} of $\mathcal{I}$ with respect to $J$.
We set 
$$ T:=\mathcal{R}(J):=\mathcal{R}(\{J^n\}_{n \in \mathbb{Z}})$$
and $\mathcal{M}=\m T+T_+$ denotes the graded maximal ideal of $T$. Then $\mathcal{R}(\mathcal{I})$ is a module finite extension of $T$. Hence there exist integers $\e_i(\mathcal{I})$, called as the {\it Hilbert coefficients} of $\mathcal{I}$ such that the equality
\begin{eqnarray*}
 \ell_R(R/{I_{n+1}})=\e_0(\mathcal{I}) \binom{n+d}{d}-\e_1(\mathcal{I}) \binom{n+d-1}{d-1}+\cdots+(-1)^d \e_d(\mathcal{I})
\end{eqnarray*}
holds true for all $n \gg 0$ $($c.f. \cite[Proposition 3.1]{Mar89}$)$.
This polynomial is called the {\it Hilbert-Samuel polynomial} of $\mathcal{I}$.
For a graded $T$-module $E$ and $\alpha \in \Z$ we denote by $E(\alpha)$ the graded $T$-module whose  grading is given by $[E(\alpha)]_n=E_{\alpha+n}$ for all $n \in \Z$.
\vskip 2mm

Following Vasconcelos \cite{V94}, we consider 
$$
S_J(\mathcal{I}):= \frac{\mathcal{R}(\mathcal{I})_{\geq 1} t^{-1}}{I_1 T}\cong \bigoplus_{n \geq 1}I_{n+1}/J^nI_1
$$ 
the {\it Sally module} of $\mathcal{I}$ with respect to $J$. Notice that $S_J(\mathcal{I})$ is a finite $T$-module. In \cite{VP96} Vaz Pinto introduced a filtration of the Sally module in the case $\mathcal{I}=\{I^n\}_{n \in \Z}$. Following this line,  we extend the definition to  any $I$-admissible filtration $\mathcal{I}$.

\begin{defn}\label{VP}
For each $l \geq 1$, consider the $T$-module
$$
C^{(\ell)}_J(\mathcal{I}):= \frac{\mathcal{R}(\mathcal{I})_{\geq \ell} t^{-1}}{I_{\ell} Tt^{\ell-1}} \cong \bigoplus_{n \geq \ell} {I_{n+1}}/J^{n-\ell+1}I_{\ell}. 
$$
Let $L^{(\ell)}_J(\mathcal{I})= [C^{(\ell)}(\mathcal{I})]_{\ell} T$ be the $T$-submodule of $C^{(\ell)}(\mathcal{I}).$
Then
\[
L^{(\ell)}_J(\mathcal{I}) \cong \bigoplus_{n \geq \ell} J^{n-\ell}I_{\ell+1}/J^{n-\ell+1}I_{\ell}.
\] 
Hence for every $\ell \ge 1$  we have the following natural exact sequence of graded $T$-modules
$$ 0 \to L^{(\ell)}_J(\mathcal{I}) \to C^{(\ell)}_J(\mathcal{I}) \to C^{(\ell+1)}_J(\mathcal{I}) \to 0. $$
\end{defn}
Throughout this section we set 
\[
 S:=S_J(\mathcal{I}),~ C^{(\ell)}:=C^{(\ell)}_J(\mathcal{I}) \mbox{ and }~L^{(\ell)}:=L^{(\ell)}_J(\mathcal{I}) 
\]
unless otherwise specified. Notice that $C^{(1)}=S,$ and since $\mathcal{R}(\mathcal I)$ is a finite graded $T$-module, $C^{(\ell)}$ and $L^{(\ell)}$ are finitely generated graded $T$-modules for every $\ell \geq 1.$ Moreover, if $\rm{r}=\rm{r}_J(\mathcal{I})$, then $C^{(\rm{r})}=0.$ 

Let us begin with the following lemma.

\begin{lem}\label{fact1}
Let $\ell \geq 1$ be an integer.
Then the following assertions hold true.
\begin{enumerate}
\item \label{fact(1)} $\m^{k} C^{(\ell)} = (0)$ for integers $k \gg 0$; hence ${\dim}_T~C^{(\ell)} \leq d$.
\item \label{fact(2)} $C^{(\ell)}=(0)$ if and only if ${\mathrm{r}}_J(\mathcal{I}) \leq \ell$.
\end{enumerate}
\end{lem}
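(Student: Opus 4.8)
The plan is to deduce both assertions from the explicit description of the graded components $[C^{(\ell)}]_n \cong I_{n+1}/J^{n-\ell+1}I_{\ell}$ recorded in Definition \ref{VP}, together with the fact (noted just above the lemma) that $C^{(\ell)}$ is a finitely generated graded $T$-module, and with the short exact sequences $0 \to L^{(\ell)} \to C^{(\ell)} \to C^{(\ell+1)} \to 0$.

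For assertion (1), I would first observe that each graded component $[C^{(\ell)}]_n = I_{n+1}/J^{n-\ell+1}I_{\ell}$ (for $n \ge \ell$; the others vanish) has finite length over $R$: indeed $J$ is generated by a system of parameters, so $J^{n-\ell+1}$ is $\m$-primary, and hence $J^{n-\ell+1}I_{\ell} \supseteq J^{n-\ell+1}I^{\ell}$ is $\m$-primary as well, so it has finite colength in $R$ and a fortiori in $I_{n+1}$. Since $C^{(\ell)}$ is a finitely generated graded $T$-module it admits finitely many homogeneous generators; each of these lies in a graded component of finite length and is therefore annihilated by some power of $\m$. Taking $k$ to be the maximum of these finitely many powers and using that $\m \subseteq T_0$ while $T$ is commutative, one gets $\m^{k}C^{(\ell)} = (0)$. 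Consequently $C^{(\ell)}$ is a finitely generated module over $T/\m^{k}T$; since $\m^{k}T$ and $\m T$ have the same radical, and the presentation $T = R[a_1t,\dots,a_dt]$ exhibits $T/\m T$ as a homomorphic image of a polynomial ring in $d$ variables over $R/\m$, we obtain $\dim_T C^{(\ell)} \le \dim T/\m^{k}T = \dim T/\m T \le d$.

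For assertion (2), suppose first that $\mathrm{r}_J(\mathcal{I}) \le \ell$, so $I_{n+1} = JI_n$ for all $n \ge \ell$; an easy induction on $n$ then gives $I_{n+1} = J^{n-\ell+1}I_{\ell}$ for all $n \ge \ell$, so every graded component of $C^{(\ell)}$ vanishes and $C^{(\ell)} = (0)$. Conversely, if $C^{(\ell)} = (0)$, the exact sequence $0 \to L^{(\ell)} \to C^{(\ell)} \to C^{(\ell+1)} \to 0$ forces $L^{(\ell)} = (0)$ and $C^{(\ell+1)} = (0)$; iterating, $L^{(\ell+m)} = (0)$ for all $m \ge 0$, and reading off the degree $\ell+m$ component of $L^{(\ell+m)}$ (which is $I_{\ell+m+1}/JI_{\ell+m}$) gives $I_{\ell+m+1} = JI_{\ell+m}$ for all $m \ge 0$, that is $\mathrm{r}_J(\mathcal{I}) \le \ell$. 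I do not expect a genuine obstacle here; the only point that needs a little care is the annihilation step in (1), where one must check that a power of $\m$ killing a set of homogeneous generators kills all of $C^{(\ell)}$ — which is exactly where one uses that $\m$ lies in degree $0$ and $T$ is commutative.
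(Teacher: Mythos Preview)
Your proof is correct and follows essentially the same path as the paper: for (1) you use finite length of each graded piece plus finite generation of $C^{(\ell)}$ over $T$ to get $\m^{k}C^{(\ell)}=0$, then bound the dimension by $\dim T/\m T \le d$, exactly as the paper does. For (2) the paper simply says ``immediate from the definition'' and would read the equalities $I_{n+1}=J^{n-\ell+1}I_\ell$ directly off the vanishing of the graded pieces; your detour through the exact sequences $0\to L^{(\ell+m)}\to C^{(\ell+m)}\to C^{(\ell+m+1)}\to 0$ is correct but unnecessary, since $[C^{(\ell)}]_n=0$ already gives $I_{n+1}=J^{n-\ell+1}I_\ell=J\cdot J^{n-\ell}I_\ell=JI_n$ for all $n\ge \ell$.
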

\begin{proof}
\begin{asparaenum}[(1)]
\item  Since
\[
 [C^{(\ell)}]_n = \begin{cases}
                   0 & \mbox{ if } n \leq \ell-1 \\
                   I_{n+1}/J^{n-\ell+1}I_{\ell} & \mbox{ if } n \geq \ell,
                  \end{cases}
\]
for each $n \geq \ell$ there exists an integer $k_n$ such that $\m^{k_n} [C^{(\ell)}]_n=0.$ Since $C^{(\ell)}$ is a finite $T$-module, there exists an integer $k'$ such that $\m^{k'} C^{(\ell)} = (0).$ Hence the first assertion follows. Therefore 
\[
 \dim_T~ C^{(\ell)}=\dim_T ~ T/\Ann_T(C^{(\ell)}) \leq \dim~ T/\m T=d. 
\]

\item This follows immediately from the definition of $C^{(\ell)}.$ \qedhere
\end{asparaenum}
\end{proof}

In the one dimensional case, we have the following elementary, but useful lemma.

\begin{lem}\label{d=1}
Suppose that $d=1.$ Then $C^{(\ell)}$ is a Cohen-Macaulay $T$-module for any $I$-admissible filtration $\mathcal{I}$ and for all $ 1 \leq  \ell \leq \rm{r}-1$ where $\rm{r}=\rm{r}_J(\mathcal{I}).$ 
\end{lem}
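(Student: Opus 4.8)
The plan is to reduce everything to a length count over the artinian local ring $R$, exploiting that in dimension one $T=\mathcal R(J)$ with $J=(a_1)$ a principal ideal generated by a nonzerodivisor, so $T$ is isomorphic to a polynomial ring $R[a_1t]$ in one variable $a_1t$ over $R$. First I would record the graded structure: by the displayed formula in the proof of Lemma \ref{fact1}, $[C^{(\ell)}]_n=0$ for $n\le \ell-1$ and $[C^{(\ell)}]_n=I_{n+1}/J^{n-\ell+1}I_\ell$ for $n\ge\ell$; moreover $[C^{(\ell)}]_n=0$ for $n\ge \mathrm r$ since $C^{(\mathrm r)}=0$ forces $I_{n+1}=J^{n-\mathrm r+1}I_{\mathrm r}\subseteq J^{n-\ell+1}I_\ell$ once $n\ge\mathrm r$ (using $J^{\mathrm r-\ell}I_{\mathrm r}\supseteq$ ... actually $I_{\mathrm r}=J^{\mathrm r-\ell}I_\ell$ needs $\ell\le\mathrm r$, which holds). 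So $C^{(\ell)}$ is a finitely generated graded $T$-module concentrated in degrees $\ell,\ell+1,\dots,\mathrm r-1$, hence $\mathrm{Ass}_T(C^{(\ell)})$ consists of graded primes.

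Next I would show $\dim_T C^{(\ell)}=1$ for $1\le\ell\le\mathrm r-1$: by Lemma \ref{fact1}(1), $\dim_T C^{(\ell)}\le d=1$, and since $C^{(\ell)}\ne 0$ (Lemma \ref{fact1}(2), as $\mathrm r_J(\mathcal I)=\mathrm r>\ell$), its dimension is $0$ or $1$; but a nonzero finitely generated graded $T$-module that is nonzero in infinitely many... no — here it is nonzero in only finitely many degrees, so I cannot argue that way. Instead I would argue: multiplication by $a_1t$ gives maps $[C^{(\ell)}]_n\to[C^{(\ell)}]_{n+1}$, and the key point is that these are \emph{injective} for $n\ge\ell$. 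Indeed if $x\in I_{n+1}$ with $a_1x\in J^{n-\ell+2}I_\ell$, write $a_1x=a_1^{n-\ell+2}y$ with $y\in I_\ell$; since $a_1$ is a nonzerodivisor, $x=a_1^{n-\ell+1}y\in J^{n-\ell+1}I_\ell$. Hence $\cdot a_1t:C^{(\ell)}\to C^{(\ell)}(1)$ is injective, so $a_1t$ is a nonzerodivisor on $C^{(\ell)}$. This immediately gives both that $\depth_{\mathcal M}C^{(\ell)}\ge 1$ and (since $\dim\le 1$) that $C^{(\ell)}$ is a Cohen-Macaulay $T$-module of dimension one.

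The main obstacle — really the only subtle point — is the injectivity of multiplication by $a_1t$ on the relevant graded pieces, i.e. the implication $a_1 I_{n+1}\cap J^{n-\ell+2}I_\ell = a_1\,(J^{n-\ell+1}I_\ell)$ for $n\ge\ell$; everything else is bookkeeping. This rests only on $a_1$ being a nonzerodivisor in the Cohen-Macaulay (hence here one-dimensional, but we only need $a_1\notin\bigcup\mathrm{Ass}R$, which holds because $J=(a_1)$ is a reduction generated by a regular element in a Cohen-Macaulay ring) ring $R$, together with $J I_n\subseteq I_{n+1}$. I would therefore structure the write-up as: (i) identify $T\cong R[a_1t]$ and the degree range of $C^{(\ell)}$; (ii) prove $a_1t$ is $C^{(\ell)}$-regular via the nonzerodivisor cancellation above; (iii) conclude $\dim_T C^{(\ell)}=1$ from Lemma \ref{fact1} together with $C^{(\ell)}\ne 0$ and the existence of the regular element, and hence that $C^{(\ell)}$ is Cohen-Macaulay. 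One should also double-check the degenerate sub-case $\mathrm r\le 1$, where the range $1\le\ell\le\mathrm r-1$ is empty and there is nothing to prove.
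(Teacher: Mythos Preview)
Your core argument---showing that $a_1t$ is a nonzerodivisor on $C^{(\ell)}$ by cancelling the regular element $a_1$ in $R$---is exactly the paper's proof, and it is all that is needed (the paper does not bother with the preliminary bookkeeping). One correction to your exploratory aside: the claim that $[C^{(\ell)}]_n=0$ for $n\ge\mathrm r$ is false in general, since the reduction number only gives $I_{n+1}=J^{n-\mathrm r+1}I_{\mathrm r}$ for $n\ge\mathrm r$, not $I_{\mathrm r}=J^{\mathrm r-\ell}I_\ell$; indeed your own injectivity argument shows $[C^{(\ell)}]_n\ne 0$ for \emph{all} $n\ge\ell$, so just delete that paragraph.
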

\begin{proof}
Let $J=(a).$ 
We claim that $at \in [T]_1$ is a non-zero divisor of $C^{(\ell)}.$ 
Let $n \geq \ell$ and $x \in I_{n+1}$ be such that $x a \in J^{n-\ell+2} I_{\ell}.$ 
Then $x a = a^{n-\ell+2} x'$ for some $x' \in I_{\ell}. $ 
Since $a$ is $R$-regular, $x=a^{n-\ell+1} x' \in J^{n-\ell+1} I_{\ell} .$ Therefore $xt^n=0 $ in $C^{(l)}.$ This proves the lemma.
\end{proof}

In this paper, the structure of the graded module $C^{(2)}$ plays an important role. We derive some basic properties of $C^{(2)}$ which we need.

In the following result we need that $J \cap I_2=JI_1$ holds true.
This condition is automatically satisfied if $\mathcal{I}=\{\m^n\}_{n \in \Z}$ or if $\mathcal{I}=\{\overline{I^n}\}_{n \in \mathbb{Z}}$ (see \cite{H87, Itoh88, HU14}).
We also notice that
$$\ell_R(I_2/JI_1)=\e_0(\mathcal{I})+(d-1)\ell_R(R/I_1)-\ell_R(I_1/I_2)$$ 
holds true (see for instance \cite[Corollary 2.1]{RV10}), so that  $\ell_R(I_2/JI_1)$ does not depend on a minimal reduction $J$ of $\mathcal{I}$. We remark that the following proposition (\eqref{C_2(1)}-\eqref{C_2(4)}) was proved in \cite[Propositions 2.2, 2.8, and 2.9]{OR16} in the case $\mathcal{I}=\{I^n\}_{n \in \Z}, $ but the proof we present is different. 
 Moreover, Proposition \ref{C_2}\eqref{C_2(5)} improves \cite[Lemma 2.11]{OR16}. 

Throughout this paper $\mathrm{HS}_{M}(z)$ denotes the Hilbert series of a graded module $M.$ 

\begin{prop}\label{C_2}
Let $\p=\m T$ and suppose that $J \cap I_2=JI_1$. Then the following assertions hold true.
\begin{enumerate}
\item \label{C_2(1)} $\Ass_T~(C^{(2)}) \subseteq \{\p\}$. Hence $\dim_TC^{(2)}=d$, if $C^{(2)} \neq (0)$.
\item For all $n \geq 0,$
\begin{eqnarray*}
\ell_R(R/I_{n+1})&=&\e_0(\mathcal{I})\binom{n+d}{d}-\{\e_0(\mathcal{I})-\ell_R(R/I_1)+\ell_R(I_2/JI_1)\}\binom{n+d-1}{d-1}\\
&& + \ell_R(I_2/JI_1)\binom{n+d-2}{d-2}-\ell_R([C^{(2)}]_n).
\end{eqnarray*}
\item \label{C_2(3)} $\e_1(\mathcal{I})=\e_0(\mathcal{I})-\ell_R(R/I_1)+\ell_R(I_2/JI_1)+\ell_{T_{\p}}(C^{(2)}_{\p}).$
\item  \label{C_2(4)} $\mathrm{HS}_{G(\mathcal{I})}(z)=\frac{\ell_R(R/I_1)+(\e_0(\mathcal{I})-\ell_R(R/I_1)-\ell_R(I_2/JI_1))z+\ell_R(I_2/JI_1)z^2}{(1-z)^d}-(1-z)\mathrm{HS_{C^{(2)}}}(z)$.
\item \label{C_2(5)} Suppose $C^{(2)} \neq (0)$ and let $c=\depth_TC^{(2)}$. 
Then $\depth ~G(\mathcal{I}) =c-1$, if $c < d$. Moreover, $\depth~ G(\mathcal{I}) \geq d-1$ if and only if $C^{(2)}$ is a Cohen-Macaulay $T$-module.
\end{enumerate}
\end{prop}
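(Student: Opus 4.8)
The plan is to compute $\depth G(\mathcal I)$ from a single short exact sequence of graded $T$-modules whose middle term is Cohen--Macaulay of dimension $d$, so that the local cohomology of $G(\mathcal I)$ is governed by that of $C^{(2)}$. First I would splice the tautological sequence
$$0 \to \R(\mathcal I)_{+}(1) \to \R(\mathcal I) \to G(\mathcal I) \to 0$$
with the two surjections $\R(\mathcal I)_{+}(1) \twoheadrightarrow S \twoheadrightarrow C^{(2)}$ coming from the Sally module and its filtration (Definition \ref{VP}). Let $K$ be the kernel of the composite $\R(\mathcal I)_{+}(1) \twoheadrightarrow C^{(2)}$; equivalently $K$ is the $T$-submodule of $\R(\mathcal I)$ generated by $I_1$ in degree $0$ and $I_2$ in degree $1$, so that its degree-$n$ component is $J^{n-1}I_2$ for $n\ge 1$. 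Setting $B:=\R(\mathcal I)/K$ one obtains from $K\subseteq \R(\mathcal I)_{+}(1)\subseteq \R(\mathcal I)$ the exact sequence
$$0 \to C^{(2)} \to B \to G(\mathcal I) \to 0. \qquad (\clubsuit)$$
Here $[B]_n = I_n/J^{n-1}I_2$ for $n\ge 2$; in particular $\dim_T B = d$, and $\mathrm{HS}_B = \mathrm{HS}_{G(\mathcal I)} + \mathrm{HS}_{C^{(2)}}$ is compatible with the computation in \eqref{C_2(4)}.

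The crux is to prove that $B$ is a Cohen--Macaulay $T$-module. Since $R/\m$ is infinite, I would choose $a\in J$ a minimal generator that is superficial for $\mathcal I$ and argue that $\xi:=at\in [T]_1$ is a nonzerodivisor on $B$ with $B/\xi B$ isomorphic to the module built exactly as above, but for the quotient filtration $\{(I_n+(a))/(a)\}$ on $R/(a)$. An induction on $d$ then reduces the Cohen--Macaulayness of $B$ to the case $d=1$, where $\xi$ is a nonzerodivisor by the elementary argument already used in Lemma \ref{d=1} (regularity of $a$ on $R$). Injectivity of multiplication by $\xi$ in degree $n$ is the implication ``$x\in I_n$ and $ax\in J^{n}I_2 \Rightarrow x\in J^{n-1}I_2$'', and it is precisely here that the hypothesis $J\cap I_2=JI_1$ is used: it prevents spurious relations in low degree and makes the superficial reduction behave well.

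Granting that $B$ is Cohen--Macaulay of dimension $d$, so that $\H^i_{\mathcal M}(B)=0$ for $i<d$, the conclusion is formal. From the long exact sequence of local cohomology attached to $(\clubsuit)$,
$$\cdots \to \H^i_{\mathcal M}(B) \to \H^i_{\mathcal M}(G(\mathcal I)) \xrightarrow{\;\partial\;} \H^{i+1}_{\mathcal M}(C^{(2)}) \to \H^{i+1}_{\mathcal M}(B) \to \cdots,$$
and using $\dim_T C^{(2)}=d$ from \eqref{C_2(1)} (so $c=\depth_T C^{(2)}\le d$): if $c<d$ then for $i<c-1$ the two neighbours $\H^i_{\mathcal M}(B)$ and $\H^{i+1}_{\mathcal M}(C^{(2)})$ vanish, whence $\H^i_{\mathcal M}(G(\mathcal I))=0$; at $i=c-1$ the terms $\H^{c-1}_{\mathcal M}(B)$ and $\H^{c}_{\mathcal M}(B)$ vanish (as $c-1<c<d$), so $\partial$ is an isomorphism $\H^{c-1}_{\mathcal M}(G(\mathcal I))\cong \H^{c}_{\mathcal M}(C^{(2)})\ne 0$. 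Therefore $\depth G(\mathcal I)=c-1$.

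For the equivalence, the case $c<d$ already gives $\depth G(\mathcal I)=c-1\le d-2$, so $\depth G(\mathcal I)\ge d-1$ can occur only when $c=d$, i.e. when $C^{(2)}$ is Cohen--Macaulay. Conversely, if $c=d$ the same sequence gives $\H^i_{\mathcal M}(G(\mathcal I))=0$ for all $i<d-1$, hence $\depth G(\mathcal I)\ge d-1$ (the precise value being $d-1$ or $d$ according to whether the connecting map $\H^{d-1}_{\mathcal M}(G(\mathcal I))\to \H^{d}_{\mathcal M}(C^{(2)})$ is nonzero). The main obstacle is thus entirely contained in the middle step: proving that $B$ is Cohen--Macaulay, equivalently that $\xi$ is a nonzerodivisor in every degree. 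Once this structural fact replaces the Hilbert-series estimates of \cite{OR16}, both the sharp equality $\depth G(\mathcal I)=c-1$ and the Cohen--Macaulay criterion follow uniformly.
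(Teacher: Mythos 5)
Your exact sequence $(\clubsuit)$ is correctly constructed, the formal local-cohomology argument at the end is sound, and even your nonzerodivisor claim is true: the hypothesis $J\cap I_2=JI_1$ does imply $(a_1)\cap J^nI_2=a_1J^{n-1}I_2$ for all $n\geq 1$ (this is Lemma \ref{basic}\eqref{basic2} with $i=1$), whence $a_1t$ is $B$-regular in every degree. The fatal problem is the crux claim itself: $B=\R(\mathcal I)/K$ is \emph{not} Cohen--Macaulay in general, and it fails to be so exactly in the cases where assertion (5) has content. Indeed, suppose $d\geq 2$ and $\depth~G(\mathcal I)=0$; this is allowed by the proposition (it is the case $c=1$), and such instances exist under the hypothesis $J\cap I_2=JI_1$, e.g. the $I$-adic filtration of the integrally closed ideals of \cite[Theorem 5.1]{OR16} with $\depth~G(I)=0$. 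Choose a homogeneous socle element $0\neq \ov{u}\in \mathrm{H}^0_{\mathcal M}(G(\mathcal I))$, say $u\in I_{n-1}\setminus I_n$ with $\mathcal M\ov{u}=0$, so that $\m u\subseteq I_n$ and $a_iu\in I_{n+1}$ for all $i$. Then the class of $a_1u$ in $[B/(a_1t)B]_{n+1}=I_{n+1}/(a_1I_n+J^nI_2)$ is killed by $\mathcal M$ (because $\m(a_1u)\subseteq a_1I_n$ and $a_i(a_1u)\in a_1I_{n+1}$), and it is nonzero: if $a_1u=a_1w+z$ with $w\in I_n$ and $z\in J^nI_2$, then $a_1(u-w)\in (a_1)\cap J^nI_2=a_1J^{n-1}I_2$, so $u\in I_n+J^{n-1}I_2=I_n$, a contradiction. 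Thus $B/(a_1t)B$ has a nonzero finite-length submodule, so $\depth_T B/(a_1t)B=0$ and $\depth_T B=1<d$; in particular $B$ cannot be Cohen--Macaulay, and your long exact sequence then gives no information.

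The gap traces back to your inductive step: to identify $B/\xi B$ with the analogous module $B'$ for the filtration $\{(I_n+(a))/(a)\}_{n\in\Z}$ of $R/(a)$ you need $I_n\cap(a)\subseteq aI_{n-1}+J^{n-1}I_2$ for all $n$, and by the same intersection lemma this is equivalent to $(I_n:a)=I_{n-1}$ for all $n$, i.e. to $at$ being a regular element on $G(\mathcal I)$ --- which is part of what assertion (5) is meant to control, and which fails whenever $\depth~G(\mathcal I)=0$. Superficiality of $a$ only yields this in large degrees, so the kernel of $B/\xi B\twoheadrightarrow B'$ is a finite-length module that is nonzero precisely in the problematic cases; your induction on $d$ is therefore circular. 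This is why the paper never asserts global Cohen--Macaulayness of any auxiliary $T$-module: it proves that $(T/I_2T)_P$ is Cohen--Macaulay only after localizing at primes $P\neq\mathcal M$ (Proposition \ref{CMness}), treats $\mathcal M$ separately via the nonzerodivisor $a_it$, and proves (5) by induction on $l=\depth~G(\mathcal I)$ (not on $d$), choosing $a_1t$ regular simultaneously on $G(\mathcal I)$ and on $C^{(2)}$ so that the analogous specialization $C^{(2)}/(a_1tC^{(2)})\cong C^{(2)}(\mathcal I')$ is justified. Note also that your proposal proves none of assertions (1)--(4) (it uses (1) as an input), so even the correct formal part would not by itself constitute a proof of the proposition.
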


In the proof of Proposition \ref{C_2} we need the following lemmata. 
We set $\mathcal{R}'(I)=\mathcal{R}'(\{I^n\}_{n \in \Z})$ and $G(I)=G(\{I^n\}_{n \in \Z})$.

\begin{lem} \label{basic}
Suppose $J \cap I_2=J I_1$ where $J=(a_1,a_2,\ldots,a_d).$ Then  
 \begin{enumerate}
  \item \label{basic1} for all $n \geq 0,$ $J^{n+1} \cap J^n I_2=J^{n+1} I_1.$ 
  \item \label{basic2} $(a_1,a_2,\ldots,a_i) \cap J^{n+1} I_2=(a_1,a_2\ldots,a_i) J^n I_2$ for all $i=1,2,\ldots,d$ and $n \geq 0. $   In particular, $a_i t$ is a non-zero divisor on $T/I_2 T$ for every $1\leq i \leq d.$ 
 \end{enumerate}
\end{lem}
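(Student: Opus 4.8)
The statement to be proved is Lemma~\ref{basic} (with its two parts \eqref{basic1} and \eqref{basic2}), under the hypothesis $J \cap I_2 = JI_1$.

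\medskip

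\textbf{Plan of proof.} The plan is to prove both parts by induction, using the Cohen--Macaulay hypothesis on $R$ heavily, since $a_1,\dots,a_d$ is a regular sequence on $R$. For part~\eqref{basic1}, I would argue by induction on $n$. The base case $n=0$ is precisely the hypothesis $J\cap I_2 = JI_1$. For the inductive step, suppose $x \in J^{n+1}\cap J^nI_2$. Write $x = \sum_{i} a_i y_i$ with $y_i \in J^n$ (since $x \in J^{n+1}$); the goal is to show one can instead take $y_i \in J^nI_1$, i.e.\ that $x \in J^{n+1}I_1$. The idea is to pass to the ring $R/(a_2,\dots,a_d)$ or to peel off one variable at a time: since $x \in J^nI_2 \subseteq J^{n-1}\cdot JI_2$, and we want to use the $n-1$ case, the cleanest route is probably to reduce modulo $(a_2,\dots,a_d)$ and reduce to the one-variable analysis, or alternatively to invoke part~\eqref{basic2} with $i=d$. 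In fact part~\eqref{basic2} with $i=d$ is the statement $J^{n+1}I_2 = J^{n+1}I_2$, which is vacuous; the real content is at $i < d$. So instead: observe that \eqref{basic1} will follow from repeated use of the colon-capturing identity for regular sequences, namely $(a_1,\dots,a_d)^{n+1} : a_i$ behaves well, combined with the flatness/associativity statements for the Rees algebra $T = \mathcal R(J)$ of a parameter ideal, which is a polynomial-like ring over $R$ (since $J$ is generated by a regular sequence, $\mathcal R(J) \cong R[X_1,\dots,X_d]/(\text{relations from the Koszul syzygies})$, and in fact the associated graded ring $G(J) = \bigoplus J^n/J^{n+1}$ is a polynomial ring $(R/J)[X_1,\dots,X_d]$).

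\medskip

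For part~\eqref{basic2}, I would fix $n$ and induct on $i$. The case $i = d$ is trivial as noted. For the inductive step from $i$ to $i-1$ (or building up from $i=1$), the key point is that $a_it$ should be a non-zerodivisor on $T/I_2T$, which is itself part of the claim; so one proves the two halves of \eqref{basic2} in tandem. Concretely, to show $(a_1,\dots,a_i)\cap J^{n+1}I_2 = (a_1,\dots,a_i)J^nI_2$: an element of the left side is $\sum_{j\le i} a_j z_j$ lying in $J^{n+1}I_2$; working in the graded ring $T$ and using that $G(J)$ is a polynomial ring over $R/J$, together with the assumption $J\cap I_2 = JI_1$ (equivalently $I_2T \cap \mathcal M^{?}$-type statements), one deduces the $z_j$ can be chosen in $J^nI_2$. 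The non-zerodivisor statement then says: if $a_i t \cdot \xi = 0$ in $T/I_2T$ for a homogeneous $\xi$ of degree $m$, i.e.\ $a_i u \in J^{m+1}I_2$ with $u \in J^m$, then $u \in J^mI_2$; this is the $i$ vs.\ the appropriate containment, and it follows from \eqref{basic1} and the first part of \eqref{basic2} applied in degree $m$.

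\medskip

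\textbf{Main obstacle.} The routine ingredients — that $a_1,\dots,a_d$ is $R$-regular, that $G(J) = (R/J)[X_1,\dots,X_d]$, and bookkeeping with graded components — are all standard. The genuine difficulty, and the step I expect to take the most care, is the base-to-step passage in \eqref{basic1}: showing that the single hypothesis $J\cap I_2 = JI_1$ (at level $n=0$) propagates to all $n$. This is where one must exploit that $JI_1 \subseteq I_2$ (admissibility) and that multiplication by the regular sequence interacts correctly with the filtration; the cleanest formulation is likely to show $J^nI_2 \cap J^{n+1} = J^{n+1}I_1$ by writing $J^{n+1}\cap J^nI_2 \subseteq J^n\cap J^{n-1}I_2 \cdot(\text{adjust})$ and inducting, but making the "adjust" precise requires either a Valabrega--Valla type criterion or a direct Koszul-relation argument inside $T$. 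I would try first the reduction-modulo-one-parameter approach: pass to $\bar R = R/(a_d)$, check $\bar R$ is still Cohen--Macaulay with $\bar J = (\bar a_1,\dots,\bar a_{d-1})$ and that the hypothesis descends, then induct on $d$; the $d=1$ case being essentially Lemma~\ref{d=1}'s regular-element argument. If the hypothesis does not descend cleanly, fall back to the direct graded computation in $T$.
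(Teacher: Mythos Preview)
Your proposal has a genuine gap in part~\eqref{basic1}: you have not found the key idea, and the approaches you sketch (induction on $n$, reduction modulo one parameter, colon identities) are both more complicated than necessary and, as you yourself suspect, may not go through cleanly. The paper's proof is a direct one-step argument with no induction on $n$ at all. The point is to expand $x$ using its membership in $J^nI_2$, not in $J^{n+1}$: write
\[
x \;=\; \sum_{|\alpha|=n} x_\alpha\, a_1^{\alpha_1}\cdots a_d^{\alpha_d}, \qquad x_\alpha \in I_2.
\]
Now the hypothesis $x \in J^{n+1}$ says that the image of $x$ in $J^n/J^{n+1}$ is zero. Since $a_1,\dots,a_d$ is a regular sequence, $J^n/J^{n+1}$ is a free $R/J$-module on the degree-$n$ monomials, so each coefficient $x_\alpha$ lies in $J$. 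Hence $x_\alpha \in J \cap I_2 = JI_1$, and $x \in J^n(JI_1) = J^{n+1}I_1$. You explicitly listed the fact $G(J) \cong (R/J)[X_1,\dots,X_d]$ among your ``routine ingredients'' but did not see that it finishes \eqref{basic1} immediately; the obstacle you identified as ``main'' dissolves once you expand on the correct side.

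For part~\eqref{basic2} your outline is in the right spirit (decreasing induction on $i$ from the trivial case $i=d$), but too vague to count as a proof. The paper carries out a double induction: decreasing on $i$, and for each $i$ increasing on $n$. In the step one writes $x \in (a_1,\dots,a_{i+1})J^nI_2$ by the outer induction, splits off the $a_{i+1}$-term, and uses regularity to push its coefficient into $(a_1,\dots,a_i)$; the base case $n=0$ then requires the identity $(a_1,\dots,a_i)\cap JI_1 = (a_1,\dots,a_i)I_1$, which follows from the isomorphism $(R/I_1)^d \cong J/JI_1$. Your sketch does not isolate this last point, which is where the hypothesis $J\cap I_2 = JI_1$ enters the argument for \eqref{basic2}.
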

\begin{proof}
$(1)$ It suffices to show that $J^{n+1} \cap J^n I_2 \subseteq J^{n+1} I_1$. 
Let $x \in J^{n+1} \cap J^n I_2.$ Write $x=\sum_{\alpha \in \N^d,~|\alpha|=n} x_{\alpha} a_1^{\alpha_1}a_2^{\alpha_2} \ldots a_d^{\alpha_d}$ where $x_\alpha \in I_2.$ Since $x \in J^{n+1}$ 
and $a_1,\ldots,a_d$ is a regular sequence in $R,$ 
$x_\alpha \in J $ for every $\alpha.$ Thus $x_\alpha \in J \cap I_2=JI_1$ and hence $x \in J^{n+1} I_1$ as required.

$(2)$ We prove the assertion using decreasing induction on $i.$ 
The assertion is clear for $i=d.$ 
Assume that $i<d$ and the assertion is true for $i+1.$ 
It is enough to prove that 
$$(a_1, a_2, \ldots, a_i) \cap J^{n+1} I_2 \subseteq (a_1, a_2, \ldots,a_i) J^n I_2.$$ We use induction on $n.$ 
Suppose $n=0$ and 
$$x \in (a_1, a_2, \ldots,a_i) \cap J I_2 \subseteq (a_1, a_2, \ldots, a_i, a_{i+1}) \cap J I_2.$$ 
Hence $x \in (a_1, a_2, \ldots,a_i,a_{i+1}) I_2$ by induction on $i$.
Let $x=x'+ a_{i+1}x_{i+1}$ where $x' \in (a_1,a_2,\ldots,a_i)I_2$ and $x_{i+1} \in I_2$.
Thus $a_{i+1}x_{i+1} \in (a_1,a_2,\ldots,a_i)$ which implies that 
$$
x_{i+1} \in (a_1,a_2,\ldots,a_i) \cap I_2= ((a_1,a_2,\ldots,a_i) \cap J) \cap I_2=(a_1,a_2,\ldots,a_i) \cap  J I_1.
$$ 
Since $a_1,a_2,\ldots,a_{d}$ forms a regular sequence in $R$, the map $\phi: (R/I_1)^d \to J/JI_1$ defined as 
\[
\phi(\overline{r_1},\overline{r_2},\ldots,\overline{r_d})=\overline{a_1r_1+a_2r_2+\cdots+a_dr_d}
\] 
is an isomorphism. Hence 
\[
 (a_1,a_2,\ldots,a_i) \cap  J I_1 = (a_1,a_2,\ldots,a_i) I_1.
\]
Thus $x \in (a_1,a_2,\ldots,a_i) I_2.$ 

Assume that $n \geq 1$ and that our assertion holds true for $n-1$.
Let $x \in (a_1,a_2,\ldots,a_i) \cap J^{n+1} I_2$.
We then have 
$$ 
(a_1,a_2,\ldots,a_i) \cap J^{n+1} I_2 \subseteq (a_1,a_2,\ldots,a_i,a_{i+1}) \cap J^{n+1} I_2=(a_1,a_2,\ldots,a_{i+1}) J^{n} I_2
$$  by induction on $i$.
Write $x=x' + a_{i+1} x_{i+1}$ where $x' \in (a_1,a_2,\ldots,a_i)J^{n}I_2$ and $x_{i+1} \in J^n I_2.$ 
Then $a_{i+1}x_{i+1} \in (a_1,a_2,\ldots,a_i)$ which implies that 
$$x_{i+1} \in (a_1,a_2,\ldots,a_i) \cap J^n I_2=(a_1,a_2,\ldots,a_i) J^{n-1} I_2$$ by induction on $n.$ 
Hence $x \in (a_1,a_2,\ldots,a_i) J^n I_2$ as required. \qedhere
\end{proof}

\begin{lem}\label{exact} 
Suppose $J \cap I_2=J I_1$ where $J=(a_1, a_2, \ldots,a_d).$ 
Then for every $1\leq i \leq d$ the sequences of $T$-modules
\begin{align}
 \label{Eqn:aiexact} 0 \longrightarrow T/I_1 T \overset{a_i}{\longrightarrow}& T/I_2 T \longrightarrow T/[I_2T+a_iT]  \longrightarrow 0 \\
 \label{Eqn:aitexact} 0 \longrightarrow (T/[I_2 T+JT])(-1) \overset{a_it}{\longrightarrow}& T/[I_2T+a_iT] \longrightarrow  T/[I_2 T+a_it T+a_iT] \longrightarrow 0
\end{align}
are exact. 
\end{lem}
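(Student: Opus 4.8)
The plan is to verify exactness of the two short exact sequences \eqref{Eqn:aiexact} and \eqref{Eqn:aitexact} directly from the graded structure of $T=\mathcal R(J)$ and the hypothesis $J\cap I_2=JI_1$, using Lemma \ref{basic} as the main input. For \eqref{Eqn:aiexact}: surjectivity on the right and exactness in the middle are formal, so the only content is injectivity of multiplication by $a_i$ on $T/I_1T$. In degree $n$ this map is $J^n/J^nI_1 \xrightarrow{a_i} J^{n+1}/J^{n+1}I_2$, and an element of the kernel is an $x\in J^n$ with $a_i x\in J^{n+1}I_2$. Then $a_i x\in (a_i)\cap J^{n+1}I_2 = a_i J^n I_2$ by Lemma \ref{basic}\eqref{basic2} (the case $i=1$, up to relabelling; more precisely one uses that $(a_i)\cap J^{n+1}I_2 = a_i J^n I_2$, which is the $i=1$ instance of \eqref{basic2} after permuting the generators, legitimate since $J\cap I_2 = JI_1$ is symmetric in the $a_j$). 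Cancelling $a_i$ (a nonzerodivisor on $R$) gives $x\in J^nI_2 \subseteq J^n$; but we also need $x\in J^n I_1$. Here I would use Lemma \ref{basic}\eqref{basic1}: $x\in J^n$ and $x\in J^nI_2$ means $x\in J^n\cap J^{n-1}I_2$? No — one wants $x \in J^n I_1$ from $x\in J^n I_2$, which is immediate since $I_2\subseteq I_1$. So in fact once $x=a_i^{-1}(a_i x)\in J^nI_2\subseteq J^nI_1$ we are done. Wait: I should double-check the target is $T/I_2T$ in degree $n+1$, i.e. $J^{n+1}/J^{n+1}I_2$, so "$a_ix\in J^{n+1}I_2$" is exactly the condition to be zero there; good.

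For \eqref{Eqn:aitexact}: the module $T/[I_2T+a_iT]$ has degree-$n$ piece $J^n/(J^nI_2 + a_iJ^{n-1})$, and multiplication by $a_it\in[T]_1$ raises degree by one. Surjectivity onto $T/[I_2T+a_itT+a_iT]$ in each degree is by definition of that quotient as $\bigl(T/[I_2T+a_iT]\bigr)\big/(a_it)$. The kernel in degree $n+1$ consists of $x+J^{n+1}I_2+a_iJ^n$ with $x\in a_iJ^n$, i.e. $x = a_i y$, $y\in J^n$; I must show the preimage class $y + J^nI_2 + a_iJ^{n-1}$ in degree $n$ is well-defined and agrees with the image of $T/[I_2T+JT]$ shifted. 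Concretely the claim reduces to: if $a_i y\in J^{n+1}I_2 + a_iJ^n$ (automatic) and we want the map $(T/[I_2T+JT])(-1)\to T/[I_2T+a_iT]$, $ \bar z\mapsto a_i t\cdot \bar z$, to be injective — that is, $a_i z\in J^{n+1}I_2 + a_i J^n$ with $z\in J^n$ forces $z\in J^nI_2 + J^{n+1}$, i.e. $z\in J^nI_2+J^{n+1}=J^{n+1}+J^nI_2$. Writing $a_i z = w + a_i u$ with $w\in J^{n+1}I_2$, $u\in J^n$ gives $a_i(z-u)\in J^{n+1}I_2$, so $z-u\in (a_i)\cap J^{n+1}I_2\cdot a_i^{-1}$; by Lemma \ref{basic}\eqref{basic2}, $(a_i)\cap J^{n+1}I_2 = a_iJ^nI_2$, hence $z-u\in J^nI_2$ and $z\in J^nI_2 + J^n$. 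But I need $z\in J^nI_2 + J^{n+1}$, not just $z\in J^n$ — and indeed $z\in J^nI_2+J^{n+1}$ is exactly the condition "$\bar z=0$ in $[T/(I_2T+JT)]_n$", since $[JT]_n = J^{n+1}$. So the subtlety is whether $u\in J^n$ can be absorbed: we have $z = (z-u) + u$ with $z-u\in J^nI_2$ and $u\in J^n$, but $u$ is not a priori in $J^{n+1}$. Resolving this last point is where I expect to spend real effort, and the key will be to unwind the definition of the third term so that $u\in J^n$ (not $J^{n+1}$) is already what is being quotiented — i.e. the grading shift and the choice of generators in $I_2T + a_itT + a_iT$ are arranged precisely so that this works, and a careful bookkeeping of degrees closes the gap.

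The main obstacle, then, is not any deep commutative algebra — it is entirely carried by Lemma \ref{basic} — but rather the precise matching of the maps, the degree shift $(-1)$, and the three-term quotient in \eqref{Eqn:aitexact} with the Artin–Rees-type colon computations; one must be meticulous that "$a_it$ is a nonzerodivisor on $T/I_2T$" (already recorded in Lemma \ref{basic}\eqref{basic2}) is used in the right place to get injectivity of the first map in \eqref{Eqn:aitexact}. I would organize the write-up as: (i) state that \eqref{Eqn:aiexact} is the evident presentation and injectivity of $a_i$ follows from $(a_i)\cap J^{n+1}I_2 = a_iJ^nI_2$ plus $a_i$ being $R$-regular; (ii) observe $T/[I_2T+a_iT] = \bigl(T/I_2T\bigr)/a_i$ and that $a_it$ acts on it with the stated kernel/cokernel by a colon computation, again invoking Lemma \ref{basic}\eqref{basic2}; (iii) identify the cokernel of $a_it$ with $T/[I_2T+a_itT+a_iT]$ tautologically. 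This keeps every step a one-line consequence of Lemma \ref{basic} and avoids any genuinely new argument.
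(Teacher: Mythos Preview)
Your proposal has a consistent degree-bookkeeping error that produces both a false shortcut in \eqref{Eqn:aiexact} and the artificial ``subtlety'' you wrestle with in \eqref{Eqn:aitexact}. The element $a_i$ lies in $R=T_0$, not in $T_1$; it is $a_it$ that has degree one. Hence multiplication by $a_i$ on $T$ is degree-preserving, and $[a_iT]_n = a_iJ^n$, not $a_iJ^{n-1}$.

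For \eqref{Eqn:aiexact}: the map $T/I_1T \xrightarrow{a_i} T/I_2T$ sends $\overline{xt^n}$ (with $x\in J^n$) to $\overline{a_ixt^n}$ in the \emph{same} degree $n$, so the vanishing condition is $a_ix\in [I_2T]_n=J^nI_2$, not $J^{n+1}I_2$. Your cancellation via Lemma~\ref{basic}\eqref{basic2} then only yields $x\in J^{n-1}I_2$, which does not give $x\in J^nI_1$. The paper argues differently: since also $a_ix\in J^{n+1}$, Lemma~\ref{basic}\eqref{basic1} gives $a_ix\in J^{n+1}\cap J^nI_2=J^{n+1}I_1$, i.e.\ $(a_it)\cdot\overline{xt^n}=0$ in $T/I_1T$; as $T/I_1T\cong (R/I_1)[X_1,\dots,X_d]$ and $a_it$ corresponds to a variable, one concludes $x\in J^nI_1$. (Your route can be repaired, but only by invoking Lemma~\ref{basic}\eqref{basic1} afterwards: $x\in J^n\cap J^{n-1}I_2=J^nI_1$.)

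For \eqref{Eqn:aitexact}: with the correct $[a_iT]_{n+1}=a_iJ^{n+1}$, the vanishing condition on $\overline{a_izt^{n+1}}$ reads $a_iz\in J^{n+1}I_2+a_iJ^{n+1}$. In your decomposition $a_iz=w+a_iu$ one therefore has $u\in J^{n+1}$, not merely $u\in J^n$. Then $a_i(z-u)\in (a_i)\cap J^{n+1}I_2=a_iJ^nI_2$ by Lemma~\ref{basic}\eqref{basic2}, so $z-u\in J^nI_2$ and $z\in J^nI_2+J^{n+1}=[I_2T+JT]_n$ immediately---your ``subtlety'' evaporates. This is exactly the paper's argument; once the grading is set straight, your outline and the paper's proof coincide.
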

\begin{proof}
To prove the exactness of \eqref{Eqn:aiexact} it suffices to prove that $\mu_{a_i}:T/I_1 T \overset{a_i}{\to} T/I_2 T$ is injective. Consider $xt^n \in T_n$ where $x \in J^n$ and assume that $\mu_{a_i}(\overline{xt^n})=\overline{a_ixt^n}=0$  in $T/I_2 T.$ 
Here $\overline{(\cdot)}$ denotes the image of an element in respective quotients. Thus $a_i x \in J^{n+1} \cap J^nI_2=J^{n+1} I_1$ by Lemma \ref{basic}\eqref{basic1}. 
Since 
\[
 T/I_1 T \simeq (R/I_1)[X_1,\ldots,x_d],
\]
$a_it$ is a non-zero divisor on $T/I_1 T.$ Therefore $x \in J^n I_1$ and hence $\mu_{a_i}$ is injective.
 
 Now we prove that the sequence \eqref{Eqn:aitexact} is exact. It is enough to prove that the map 
 \[
 \mu_{a_it}:(T/[I_2 T+JT])(-1) \overset{a_it}{\to} T/[I_2T+a_iT]
 \]
 is injective. 
 Let $xt^n \in T_n$ and assume that $\mu_{a_it} (\overline{xt^n})=\overline{a_ixt^{n+1}}=0$ in $T/[I_2T+a_iT]$.
Thus $a_i x \in (I_2+a_i)J^{n+1}$.
Let $a_ix=y+a_iz$ where $y \in I_2J^{n+1}$ and $z \in J^{n+1}.$ 
Then $a_i(x-z) \in (a_i) \cap J^{n+1}I_2=a_i J^n I_2$ by Lemma \ref{basic}\eqref{basic2}. 
Thus $x-z \in J^n I_2$ and hence $x \in (I_2+J)J^{n}$. Therefore $\overline{xt^n} =0$ in $T/[I_2 T+JT]$ as required.
\end{proof}

The following proposition is crucial in the proof of Proposition \ref{C_2}.

\begin{prop}\label{CMness}
Suppose $J \cap I_2=J I_1.$ Then $\left({T}/{I_2 T}\right)_P$ is Cohen-Macaulay $T_P$-module for all $\mathcal M \neq P \in \Supp_{T} \left({T}/{I_2 T}\right).$
\end{prop}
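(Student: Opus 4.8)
The plan is to pass to the ring $T/I_2 T$, identify its structure as a polynomial-like ring over $R/I_2$, and then localize away from $\calM$. First I would observe that $T = R[a_1 t, \ldots, a_d t]$ and, since $a_1,\ldots,a_d$ is a regular sequence, $T \cong R[X_1,\ldots,X_d]$ as a graded $R$-algebra, so that $T/I_2 T \cong (R/I_2)[X_1,\ldots,X_d]$. Thus $T/I_2 T$ is a polynomial ring in $d$ variables over the artinian local ring $R/I_2$, hence it is Cohen--Macaulay of dimension $d$ as a ring in its own right (it is $\dim (R/I_2) + d = d$, and a polynomial extension of an artinian, hence Cohen--Macaulay, ring is Cohen--Macaulay). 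However, the subtlety is that we want the Cohen--Macaulay property of $(T/I_2 T)_P$ \emph{as a $T_P$-module}, and the claim excludes exactly the irrelevant maximal ideal $\calM$.

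The key point is that for a prime $P \in \Supp_T(T/I_2 T)$ with $P \neq \calM$, the localization $(T/I_2 T)_P$ is a localization of the Cohen--Macaulay ring $T/I_2 T$ at a prime (the contraction of $P$ to $T/I_2 T$), hence is itself a Cohen--Macaulay local ring; and since $T_P \to (T/I_2 T)_P$ is surjective, $(T/I_2 T)_P$ is a Cohen--Macaulay $T_P$-module provided its dimension as a $T_P$-module equals its depth. Actually the cleaner route: $(T/I_2T)_P$ is a Cohen--Macaulay ring, so it is a maximal Cohen--Macaulay module over itself, and the depth/dimension of a finitely generated module is independent of whether one computes over $T_P$ or over the quotient $(T/I_2T)_P$ (depth is measured by $\Ext$ into the module, which only sees the action factoring through the annihilator; dimension is $\dim T_P/\Ann$). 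Hence it remains to show $\dim_{T_P}(T/I_2T)_P = \depth_{T_P}(T/I_2T)_P$, i.e. that $T/I_2T$ is Cohen--Macaulay as a ring at every non-maximal prime in its support — but a localization of a Cohen--Macaulay ring at any prime is Cohen--Macaulay, with no exclusion needed. So in fact the restriction $P \neq \calM$ is not even required for this argument, and the proposition follows at once.

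Wait — the reason the statement carefully excludes $\calM$ must be that the authors want to later apply this inside an induction where $\calM$ genuinely behaves differently (e.g., $\depth_{T}(T/I_2T)$ at $\calM$ need not equal its dimension because $R$ itself need only be Cohen--Macaulay, not more; but $R/I_2$ is artinian so $R[X]/(\ldots)$ over it is Cohen--Macaulay regardless — so this concern seems moot). To be safe I would not rely on the global Cohen--Macaulayness of $T/I_2T$ as a ring if the isomorphism $T/I_2T\cong (R/I_2)[X_1,\dots,X_d]$ required more than the regular-sequence hypothesis; it does not, since $\mathcal R(J)\cong R[X_1,\dots,X_d]$ is standard for $J$ generated by a regular sequence. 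Therefore the honest plan is: (i) establish $T/I_2T \cong (R/I_2)[X_1,\dots,X_d]$; (ii) note $R/I_2$ is artinian local, hence Cohen--Macaulay, hence so is the polynomial ring $T/I_2T$; (iii) localize at the contraction of $P$ to get that $(T/I_2T)_P$ is a Cohen--Macaulay local ring; (iv) transfer this to the statement ``Cohen--Macaulay $T_P$-module'' using that depth and dimension of a module over $T_P$ agree with those computed over the surjective image $(T/I_2T)_P$.

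The main obstacle I anticipate is purely bookkeeping: verifying cleanly that the grading and the $\calM$-adic versus $P$-adic subtleties do not cause trouble, and in particular confirming that the paper intends ``Cohen--Macaulay $T_P$-module'' in the sense ``$\depth_{T_P} = \dim_{T_P}$'' (equivalently maximal Cohen--Macaulay over $(T/I_2T)_P$), for which the polynomial-ring identification does all the work. An alternative, more self-contained route that avoids invoking the ring isomorphism is to use the short exact sequences \eqref{Eqn:aiexact} and \eqref{Eqn:aitexact} from Lemma \ref{exact}: for $P \neq \calM$ at least one generator $a_i$ or $a_i t$ lies outside $P$ (since $\calM = \m T + T_+$ is the only prime containing all of $\m$ together with $T_+$), and the exact sequences exhibit such an element as a nonzerodivisor on $T/I_2T$ modulo the lower pieces, allowing an induction on $\dim (T/I_2T)_P$ that peels off a regular element at each stage. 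I would present the polynomial-ring argument as the primary proof and mention the exact-sequence induction as the reason these lemmata were set up.
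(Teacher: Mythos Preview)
Your primary argument has a genuine error: the isomorphism $T\cong R[X_1,\dots,X_d]$ is false for $d\geq 2$. The Rees algebra of a regular sequence is $R[X_1,\dots,X_d]/(a_iX_j-a_jX_i:i<j)$, not a polynomial ring; equivalently, $T_1=J$ is not a free $R$-module once $d\geq 2$. What \emph{is} true is that $T/\mathfrak{a}T\cong (R/\mathfrak{a})[X_1,\dots,X_d]$ whenever $J\subseteq\mathfrak{a}$, because then the images of the $a_i$ vanish and the Koszul relations become trivial. This gives the polynomial descriptions of $T/I_1T$ and of $T/[I_2T+JT]$ used in the paper, since $J\subseteq I_1$ and $J\subseteq I_2+J$; but $J\not\subseteq I_2$ in general, so $T/I_2T$ is \emph{not} a polynomial ring over $R/I_2$, and there is no reason for it to be Cohen--Macaulay at $\calM$. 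This is precisely why the statement excludes $\calM$ --- your suspicion that the exclusion was inessential is a symptom of the mistaken identification.

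Your ``alternative route'' is essentially the paper's actual proof, but you have not carried it out. Here is what it takes. For $P\in\Supp_T(T/I_2T)$ one has $\m\subseteq P$ (since $I_2$ is $\m$-primary), so $P\neq\calM$ forces $T_1\not\subseteq P$, i.e.\ $a_it\notin P$ for some $i$. Then $(T/[I_2T+a_itT+a_iT])_P=0$, so localizing the sequence \eqref{Eqn:aitexact} at $P$ gives $(T/[I_2T+a_iT])_P\cong((T/[I_2T+JT])(-1))_P$, which is Cohen--Macaulay because $T/[I_2T+JT]\cong(R/(I_2+J))[X_1,\dots,X_d]$. Now localize \eqref{Eqn:aiexact} at $P$: the outer terms $(T/I_1T)_P$ and $(T/[I_2T+a_iT])_P$ are Cohen--Macaulay $T_P$-modules of the same dimension, hence so is the middle term $(T/I_2T)_P$. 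The hypothesis $J\cap I_2=JI_1$ is needed for the exactness in Lemma~\ref{exact}, not for any polynomial identification of $T/I_2T$.
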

\begin{proof}
Let $P \in \Supp_{T} \left({T}/{I_2 T}\right)$ and $P \neq \mathcal M.$ Then $\m \subseteq P$ and $T_{1} \nsubseteq P.$ This implies that $a_it \notin P$ for some $i.$ Hence $[I_2 T+a_it T+a_iT]_P=T_P.$ Therefore by \eqref{Eqn:aitexact} 
\[
  ((T/[I_2 T+JT])(-1))_P  \overset{a_it}{\longrightarrow} (T/[I_2T+a_iT])_P 
\]
is an isomorphism. As 
\[T/[I_2 T+JT]\simeq R/(I_2+ J) \otimes_R G(J)= ({R}/(I_2+J))[X_1,\ldots,X_d], \]
$ ((T/[I_2 T+JT])(-1))_P$ is Cohen-Macaulay $T_P$-module. Hence $(T/[I_2T+a_iT])_P$ is Cohen-Macaulay $T_P$-module. Also, 
\[T/I_1 T\simeq R/I_1 \otimes_R G(J)= ({R}/{I_1})[X_1,\ldots,X_d]\]
implies that $(T/I_1 T)_P$ is Cohen-Macaulay $T_P$-module. By \eqref{Eqn:aiexact} the sequence
\[
0 \longrightarrow (T/I_1 T)_P \overset{a_i}{\longrightarrow} (T/I_2 T)_P \longrightarrow (T/[I_2T+a_iT])_P \longrightarrow 0
\]
is exact. Since $(T/I_1 T)_P$ and $(T/[I_2T+a_iT])_P$ are Cohen-Macaulay $T_P$-module of same dimension, 
$\left({T}/{I_2 T}\right)_P$ is Cohen-Macaulay $T_P$-module. \qedhere
\end{proof}

The following techniques are due to M. Vaz Pinto \cite[Section 2]{VP96}. For $\ell \geq 1$ let 
$$
D^{(\ell)}:=D_J^{(\ell)}(\mathcal I):=(I_{\ell+1}/ J I_\ell) \otimes_R (T/\Ann_R(I_{\ell+1}/JI_{\ell})T)
$$ 
be the graded $T$-module. Notice that 
$$
T/\Ann_R(I_{\ell+1}/JI_{\ell})T \cong (R/\Ann_R(I_{\ell+1}/JI_{\ell}))[X_1,X_2,\cdots,X_d]
$$ 
is the polynomial ring with $d$ indeterminates over the ring $R/\Ann_R(I_{\ell+1}/JI_{\ell})$. 
Hence
\[
 D^{(\ell)} \cong (I_{\ell+1}/J I_{\ell})[X_1,\ldots,X_d].
\]
Let
$$ \theta_\ell: D^{(\ell)}(-\ell) \to L^{(\ell)}$$
be an epimorphism of graded $T$-modules defined as the identity on $I_{\ell+1}/ J I_{\ell}$ and that sends   each generator $X_i$ to a generator of $L^{(\ell)}$ as a $T$-module. More precisely, $\theta_\ell(\ov{c_i} X_i)=\ov{c_i a_i t^{\ell+1}} $ where $c_i \in I_{\ell+1}.$ Extending by linearity, $\theta_{\ell}$ gives a graded $T$-module homomorphism which is surjective.

The following lemma is also a key for our proof of Proposition \ref{C_2}.

\begin{lem}\label{L1}
Let $\ell \geq 1$ and suppose that $J \cap I_{\ell+1}=JI_{\ell}$.
Then the map $\theta_{\ell}: D^{(\ell)}(-\ell) \to L^{(\ell)}$ is an isomorphism of graded $T$-modules.
\end{lem}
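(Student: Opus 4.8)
The map $\theta_\ell$ is already known to be a surjective graded $T$-module homomorphism from $D^{(\ell)}(-\ell) \cong (I_{\ell+1}/JI_\ell)[X_1,\ldots,X_d]$ onto $L^{(\ell)}$, so the only thing to prove is injectivity. The plan is to compare graded pieces degree by degree: in degree $n \geq \ell$ one has $[L^{(\ell)}]_n \cong J^{n-\ell}I_{\ell+1}/J^{n-\ell+1}I_\ell$, while $[D^{(\ell)}(-\ell)]_n$ is the degree-$(n-\ell)$ part of the polynomial extension, i.e.\ the free module on monomials $X^\alpha$ with $|\alpha|=n-\ell$ over $I_{\ell+1}/JI_\ell$. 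So I would take an element $\xi = \sum_{|\alpha|=n-\ell} \overline{c_\alpha}\, X^\alpha$ with $c_\alpha \in I_{\ell+1}$, whose image $\sum_\alpha c_\alpha a^\alpha t^{n+1}$ lies in $J^{n-\ell+1}I_\ell$ (here $a^\alpha = a_1^{\alpha_1}\cdots a_d^{\alpha_d}$), and show that each $\overline{c_\alpha} = 0$ in $I_{\ell+1}/JI_\ell$, i.e.\ $c_\alpha \in JI_\ell$.

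The key step is the following: since $a_1,\ldots,a_d$ is a regular sequence in $R$ and $\sum_\alpha c_\alpha a^\alpha \in J^{n-\ell+1}I_\ell \subseteq J^{n-\ell+1}$, the standard fact about regular sequences (the associated graded ring $G(J) = \bigoplus J^m/J^{m+1}$ is a polynomial ring over $R/J$, so the Koszul-type relations are trivial) forces each $c_\alpha$ to lie in $J$. This is exactly the same mechanism used in the proof of Lemma~\ref{basic}. More precisely, I would argue that $\sum_{|\alpha|=n-\ell} c_\alpha a^\alpha \in J^{n-\ell+1} \cap J^{n-\ell}I_{\ell+1}$, and then invoke the hypothesis $J\cap I_{\ell+1}=JI_\ell$ in a ``graded'' form to conclude $c_\alpha \in JI_\ell$. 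The cleanest route is to establish first, in analogy with Lemma~\ref{basic}\eqref{basic1}, that $J\cap I_{\ell+1}=JI_\ell$ implies $J^{m+1}\cap J^m I_{\ell+1} = J^{m+1}I_\ell$ for all $m\geq 0$: writing $x = \sum_{|\alpha|=m} x_\alpha a^\alpha$ with $x_\alpha\in I_{\ell+1}$ and $x\in J^{m+1}$, regularity of the sequence gives $x_\alpha\in J$, hence $x_\alpha\in J\cap I_{\ell+1}=JI_\ell$, hence $x\in J^{m+1}I_\ell$.

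With that lemma in hand, the injectivity argument runs as follows. Given $\xi$ as above mapping to $0$, i.e.\ $\sum_\alpha c_\alpha a^\alpha \in J^{n-\ell+1}I_\ell$, I want $c_\alpha\in JI_\ell$ for each $\alpha$. Fix $\beta$ with $|\beta|=n-\ell$. Working modulo the ideal generated by the $a_i^{\beta_i+1}$ one can isolate the $\beta$-coefficient: since $\{a^\alpha : |\alpha| = m\}$ is part of a free basis pattern coming from the regular sequence (formally, using that $G(J)$ is a polynomial ring so that $J^m/J^{m+1}$ is $R/J$-free on the degree-$m$ monomials), the congruence $\sum_\alpha c_\alpha a^\alpha \equiv 0$ modulo $J^{n-\ell+1}$ already gives $c_\alpha \in J$ for all $\alpha$; then refine using that actually $\sum_\alpha c_\alpha a^\alpha \in J^{n-\ell+1}I_\ell$ and apply the generalized lemma to get $c_\alpha \in JI_\ell$. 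Concretely: write $c_\alpha = \sum_j a_j d_{\alpha,j}$ with $d_{\alpha,j}\in R$; substituting and using freeness of monomials in degree $n-\ell+1$ over $R/J$, plus the $I_\ell$-refinement, forces $d_{\alpha,j}\in I_\ell$, so $c_\alpha\in JI_\ell$ as wanted. Hence $\ker\theta_\ell = 0$ in each degree, so $\theta_\ell$ is an isomorphism.

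\textbf{Main obstacle.} The delicate point is passing from ``each coefficient $c_\alpha$ lies in $J$'' (which is immediate from the regular sequence and $G(J)$ being a polynomial ring) to ``each $c_\alpha$ lies in $JI_\ell$''; naively one only gets membership in $J$, and one must genuinely use the hypothesis $J\cap I_{\ell+1}=JI_\ell$ together with the isomorphism $(R/I_\ell)^d \xrightarrow{\sim} J/JI_\ell$ (exactly as in the last part of the proof of Lemma~\ref{basic}\eqref{basic2}) to promote this. Organizing the bookkeeping of the multi-index coefficients so this refinement is clean — rather than an inductive mess — is where the care is needed; I expect the cleanest formulation is to first prove the auxiliary identity $J^{m+1}\cap J^m I_{\ell+1}=J^{m+1}I_\ell$ and then feed it into a one-line comparison of Hilbert functions or a direct degreewise injectivity check.
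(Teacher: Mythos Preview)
Your approach is exactly the paper's: reduce to injectivity, pick a homogeneous kernel element $\xi=\sum_{|\alpha|=n-\ell}\overline{c_\alpha}X^\alpha$ with $c_\alpha\in I_{\ell+1}$, use that $a_1,\ldots,a_d$ is a regular sequence (equivalently, that $G(J)$ is a polynomial ring over $R/J$) to deduce from $\sum_\alpha c_\alpha a^\alpha\in J^{n-\ell+1}I_\ell\subseteq J^{n-\ell+1}$ that each $c_\alpha\in J$.

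Where you go astray is in what you call the ``main obstacle''. There is no obstacle: you yourself chose the lifts $c_\alpha$ to lie in $I_{\ell+1}$. So as soon as you know $c_\alpha\in J$, you have $c_\alpha\in J\cap I_{\ell+1}=JI_\ell$ directly from the hypothesis, and hence $\overline{c_\alpha}=0$. That is the entire proof. The auxiliary identity $J^{m+1}\cap J^m I_{\ell+1}=J^{m+1}I_\ell$, the isomorphism $(R/I_\ell)^d\cong J/JI_\ell$, and the bookkeeping you propose (rewriting $c_\alpha=\sum_j a_j d_{\alpha,j}$ and tracking monomials in degree $n-\ell+1$) are all unnecessary; indeed that last concrete plan runs into trouble because distinct $d_{\alpha,j}$'s combine into the same degree-$(n-\ell+1)$ monomial, so you cannot recover them individually. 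The one-line finish $c_\alpha\in J\cap I_{\ell+1}=JI_\ell$ avoids all of this.
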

\begin{proof}
It suffices to show that $K_\ell:=\ker \theta_\ell=0.$ Let $n \geq l$ and $F(X_1,\ldots,X_d) \in [K_\ell]_n \subseteq [D^{(\ell)}(-\ell)]_n.$ Then 
$F(X_1,\ldots,X_d)$ is a homogeneous polynomial of degree $n-\ell $ with coefficients in $I_{\ell+1}/ J I_{\ell}.$ 
Write $F(X_1,\ldots,X_d)=\sum_{\alpha \in \N^d,|\alpha|=n-\ell} \ov{b_\alpha} X_1^{\alpha_1} \cdots X_d^{\alpha_d}$ where $b_{\alpha} \in I_{\ell+1}.$
Since $\theta_1(F(X_1,\ldots,X_d))=0,$ $\sum_{\alpha \in \N^d,|\alpha|=n-\ell} {b_\alpha} a_1^{\alpha_1} \cdots a_d^{\alpha_d} \in J^{n-\ell+1} I_\ell \subseteq J^{n-\ell+1}. $ As $a_1,\ldots,a_d$ forms a regular sequence in $R,$ $b_\alpha \in J$ for every $\alpha.$ Thus $b_\alpha \in J \cap I_{\ell+1}= J I_\ell$ and hence $\ov{b_\alpha}=0.$ Therefore $F=0$ as required.
\end{proof}

We are now in a position to prove Proposition \ref{C_2}.

\begin{proof}[Proof of Proposition \ref{C_2}]
\begin{asparaenum}[(1)]
\item 
Suppose $C^{(2)} \neq (0).$ Let $P \in \Ass_T~(C^{(2)})$.
Then by Lemma \ref{fact1}\eqref{fact(1)} $\m T \subseteq P.$
Suppose that $\m T \subsetneq P.$ Then $\height_TP \geq 2$, because $\m T$ is a height one prime ideal in $T$.
We look at the following exact sequences of $T_P$-modules
$$ 0 \to I_2 T_P \to (\mathcal{R}(\mathcal{I})_{\geq 2})_P \to C^{(2)}_P \to 0 \ \ \ (*_3) \ \ \ 
\mbox{and} \ \ \ 0 \to I_2 T_P \to T_P \to  T_P/I_2 T_P \to 0 \ \ \ (*_4) $$
which follow from the canonical exact sequences
$$ 0 \to I_2 T(-1) \to (\mathcal{R}(\mathcal{I})_{\geq 2})(1) \to C^{(2)} \to 0 \ \ \ \mbox{and} \ \ \ 0 \to I_2 T \to  T \to T/I_2 T \to 0 $$ of $T$-modules.
We notice here that $\depth_{T_P} (\mathcal{R}(\mathcal{I})_{\geq 2}(1))_P > 0 $, because $a_1 \in P $ is a non-zero divisor on $\mathcal{R}(\mathcal{I})_{\geq 2}(1)$.
Thanks to the depth lemma and the exact sequence $(*_3)$, we get $\depth_{T_P}I_2 T_P=1$, because $\depth_{T_P}C^{(2)}_P=0$. Since $T$ is Cohen-Macaulay, $\depth_{T_P} T_P \geq 2$. Hence from the exact sequence $(*_4)$ we get that $\depth_{T_P}T_P/I_2 T_P=0.$ 
On the other hand, $\depth_{T_P}T_P/I_2T_P=\height~ P-1>0 $ by Proposition \ref{CMness} if $P \neq \mathcal{M}$, and $\depth_{T_{\mathcal{M}}}T_{\mathcal{M}}/I_2T_{\mathcal{M}}>0$ by Lemma \ref{basic}\eqref{basic2}. However, it is contradiction.
Thus, $P=\m T$ as required.

\item
For all $n \geq 0$, let us look at the following two exact sequences
$$0 \to I_{n+1}/J^nI_1 \to R/J^nI_1 \to R/I_{n+1} \to 0 \ \ \mbox{and} \ \ 0 \to J^n/J^nI_1 \to R/J^nI_1 \to R/J^n \to 0$$
of $R$-modules.
Then, because $S_n \cong I_{n+1}/J^nI_1$, $J^n/J^nI_1 \cong (R/I_1)^{\binom{n+d-1}{d-1}}$, and $\ell_R(R/J^n)=\e_0(\mathcal I)\binom{n+d-1}{d}$, we get
\begin{eqnarray*}
\ell_R(R/I_{n+1})&=&\e_0(\mathcal{I})\binom{n+d}{d}-\{\e_0(\mathcal{I})-\ell_R(R/I_1)\}\binom{n+d-1}{d-1}-\ell_R(S_n),
\end{eqnarray*}
for all $n \geq 0$ (c.f. \cite[Proposition 2.2(2)]{GNO08} and \cite[Equation (6.3)]{RV10}).
Then thanks to the exact sequence
$$ \hspace*{1cm} 0 \longrightarrow L^{(1)} \longrightarrow S \longrightarrow C^{(2)} \longrightarrow 0 \ \ \ \hspace*{2cm}(\dagger_1) $$
and isomorphism $L^{(1)} \cong D^{(1)}(-1) \cong ((I_2/J I_1)[X_1,\ldots,X_d])(-1)$ by Lemma \ref{L1}, we have 
$$ 
\ell_R(S_n)=\ell_R(I_2/JI_1)\binom{n+d-2}{d-1}+\ell_R([C^{(2)}]_n) .
$$
Thus we get the required equality.

\item Using assertion $(2)$ and the argument as in \cite[Proposition 2.2(3)]{GNO08} we get 
$\e_1(\mathcal{I})=\e_0(\mathcal{I})-\ell_R(R/I_1)+\ell_{T_\p}(S_{\p}).$ Now using the exact sequence 
$(\dagger_1)$ and Lemma \ref{L1} we get the desired equality. 

\item
We have $$\mathrm{HS}_{G(\mathcal{I})}(z)=\frac{\ell_R(R/I_1)+(\e_0(\mathcal{I})-\ell_R(R/I_1))z}{(1-z)^d}-(1-z){\mathrm{HS}_{S}}(z)$$
(c.f. \cite[Proposition 6.3]{RV10}) and
$${\mathrm{HS}_{S}}(z)=\frac{\ell_R(I_2/JI_1)z}{(1-z)^d}+{\mathrm{HS}_{C^{(2)}}}(z)$$
by the above exact sequence $(\dagger_1)$ and isomorphism $L^{(1)} \cong D^{(1)}(-1)$ of graded $T$-modules by Lemma \ref{L1}.
Therefore, we get the required equality.

\item
Thanks to the above exact sequence $(\dagger_1)$ and isomorphism $L^{(1)} \cong D^{(1)}(-1)$ of graded $T$-modules by Lemma \ref{L1}, we have $\depth_T S \geq c$.
Hence $\depth~ G(\mathcal{I}) \geq c-1$ because $\depth ~G(\mathcal{I}) \geq \depth_T S-1$ by \cite[Proposition 6.3]{RV10}.

In the rest of the proof of assertion (5), it suffices to show that 
$$\min\{d-1, \depth~G(\mathcal{I})\} \leq c-1.$$
We proceed by induction on $l=\depth~ G({\mathcal{I}})$.
Thanks to Proposition \ref{C_2}\eqref{C_2(1)} we have $c>0$ so that we may assume that $l \geq 1$ and that our assertion holds true for $l-1$.

We choose $a_1t$ which is both $G(\mathcal{I})$-regular, $C^{(2)}$-regular (as $\Ass_T (C^{(2)})=\{\m T\}$) and $J=(a_1,a_2,\ldots,a_d).$ Let $$\alpha : R[t] \to (R/(a_1))[t]$$ be the natural $R$-algebra map defined by $\alpha(t)=t$.
We set $\mathcal{I}'=\{[I_n+(a_1)]/(a_1)\}_{n \in \mathbb{Z}}$ and $J'=J/(a_1)$.
Then, since $\alpha([\mathcal{R}(\mathcal{I})]_n)=[\mathcal{R}(\mathcal{I}')]_{n} \cong [I_n+(a_1)]/(a_1)$ and $\alpha(T_n)=[\mathcal{R}(J')]_n \cong [J^n+(a_1)]/(a_1)$ for all $n \geq 0$, the map $\alpha$ induces surjective homomorphisms $$\mbox{$\mathcal{R}(\mathcal{I}) \to \mathcal{R}(\mathcal{I}')$ and $T \to \mathcal{R}(J')$}$$ of graded $R$-algebras.
Thanks to these homomorphisms, we get the natural epimorphism 
$$\phi:C^{(2)} \to C^{(2)}(\mathcal{I}').$$
Then since $\alpha(a_1t)=0$ we have $\phi(a_1t \cdot C^{(2)})=(0)$, 
so that we have the epimorphism 
$$
\overline{\phi}:C^{(2)}/(a_1t C^{(2)}) \to C^{(2)}(\mathcal{I}')
$$ 
of graded $T$-modules.
Because $a_1t$ is $G(\mathcal{I})$-regular, the map $\overline{\phi}$ is an isomorphism so that $\depth_{R(J')}C_2(\mathcal{I}')=c-1<d-1$.
We also have $J'\cap{\mathcal I'}_2=J{\mathcal I'}_1$ and as $a_1t$ is $G(\mathcal {I})$-regular, $\depth~ G(\mathcal{I'}) =l-1$.
Hence by the hypothesis of induction on $l$, we get
$$
\min\{\dim R/(a_1)-1, \depth~ G(\mathcal{I}')\} \leq \depth_{R(J')}C_2(\mathcal{I}')-1=c-2.
$$
Then since $\dim R/(a_1)=d-1$ and $\depth~ G(\mathcal{I'}) =l-1$, we get $\min\{d-1, l\} \leq c-1$ as required. 
This completes the proof of assertion (5). \qedhere
\end{asparaenum}
\end{proof}

Combining Proposition \ref{C_2}\eqref{C_2(1)}, \ref{C_2}\eqref{C_2(3)}, \ref{C_2}\eqref{C_2(4)} and Lemma \ref{fact1}\eqref{fact(2)}, and using the Valabrega-Valla criterion (c.f.\cite[Theorem 1.1]{RV10}) we obtain the following result that was proven by Elias and Valla \cite[Theorem 2.1]{EV91} in the case $\mathcal{I}=\{\m^n\}_{n \in \mathbb{N}}$ and by Guerrieri and Rossi \cite[Theorem 2.2 and Proof of Proposition 2.3]{GR98} for any $I$-admissible filtration.

\begin{cor}\label{EV}
Suppose that $J \cap I_2=JI_1$.
Then we have 
$$\e_1(\mathcal{I}) \geq \e_0(\mathcal{I})-\ell_R(R/I_1)+\ell_R(I_2/JI_1).$$
The equality $\e_1(\mathcal{I}) = \e_0(\mathcal{I})-\ell_R(R/I_1)+\ell_R(I_2/JI_1)$ holds true if and only if $\rm{r}_J(\mathcal{I}) \leq 2$.
When this is the case, the following assertions hold true:
\begin{itemize}
\item[$(i)$] $\mathrm{HS}_{G(\mathcal{I})}(z)=\frac{\ell_R(R/I_1)+(\e_0(\mathcal{I})-\ell_R(R/I_1)-\ell_R(I_2/JI_1))z+\ell_R(I_2/JI_1)z^2}{(1-z)^d},$
\item [$(ii)$] if $d \geq 2,$ then $\e_2(\mathcal{I})=\ell_R(I_2/J I_1)=\e_1(\mathcal{I})-\e_0(\mathcal{I})+\ell_R(R/I_1)$ and $\e_i(\mathcal{I})=0$ for all $3 \leq i \leq d$, and 
\item[$(iii)$] $G(\mathcal{I})$ is a Cohen-Macaulay ring.
\end{itemize}
\end{cor}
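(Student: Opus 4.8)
The plan is to deduce Corollary \ref{EV} directly from the structural results of Proposition \ref{C_2} together with Lemma \ref{fact1}\eqref{fact(2)}, rather than redoing the Vaz Pinto module computations from scratch. The inequality $\e_1(\mathcal{I}) \geq \e_0(\mathcal{I})-\ell_R(R/I_1)+\ell_R(I_2/JI_1)$ is immediate from Proposition \ref{C_2}\eqref{C_2(3)}, since $\ell_{T_{\p}}(C^{(2)}_{\p}) \geq 0$. For the equality case, observe that by Proposition \ref{C_2}\eqref{C_2(1)} we have $\Ass_T(C^{(2)}) \subseteq \{\p\}$ with $\p = \m T$; hence $C^{(2)} = (0)$ if and only if $C^{(2)}_{\p} = (0)$, i.e. if and only if $\ell_{T_{\p}}(C^{(2)}_{\p}) = 0$. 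So the equality $\e_1(\mathcal{I}) = \e_0(\mathcal{I})-\ell_R(R/I_1)+\ell_R(I_2/JI_1)$ holds iff $C^{(2)} = (0)$, which by Lemma \ref{fact1}\eqref{fact(2)} is equivalent to $\rm{r}_J(\mathcal{I}) \leq 2$.

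Next I would establish the three consequences under the assumption $\rm{r}_J(\mathcal{I}) \leq 2$, equivalently $C^{(2)} = (0)$. For $(i)$, substitute $\mathrm{HS}_{C^{(2)}}(z) = 0$ into the formula of Proposition \ref{C_2}\eqref{C_2(4)}, which yields
$$\mathrm{HS}_{G(\mathcal{I})}(z)=\frac{\ell_R(R/I_1)+(\e_0(\mathcal{I})-\ell_R(R/I_1)-\ell_R(I_2/JI_1))z+\ell_R(I_2/JI_1)z^2}{(1-z)^d}$$
directly. For $(ii)$, when $d \geq 2$, read off the Hilbert coefficients $\e_i(\mathcal{I})$ from this rational expression: expanding $h(z) = \ell_R(R/I_1) + (\e_0-\ell_R(R/I_1)-\ell_R(I_2/JI_1))z + \ell_R(I_2/JI_1)z^2$, the coefficient $\e_i(\mathcal{I})$ equals $\binom{i}{i} h^{(i)}(1)/i!$ up to sign, i.e. one computes $\e_0 = h(1)$, $\e_1 = h'(1)$, $\e_2 = h''(1)/2 = \ell_R(I_2/JI_1)$, and $h^{(j)}(1) = 0$ for $j \geq 3$ since $\deg h = 2$, giving $\e_i(\mathcal{I}) = 0$ for $3 \leq i \leq d$; the asserted relation $\e_2(\mathcal{I}) = \e_1(\mathcal{I}) - \e_0(\mathcal{I}) + \ell_R(R/I_1)$ is then just the equality case already proven. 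For $(iii)$, the Cohen-Macaulayness of $G(\mathcal{I})$ follows either from the Valabrega-Valla criterion (once one knows $\rm{r}_J(\mathcal{I}) \leq 2$ and $J \cap I_2 = JI_1$, the relevant intersections $J^n \cap I_{n+1} = J^n I_1$ hold by a short induction using Lemma \ref{basic}\eqref{basic1}) or, more cleanly, from Proposition \ref{C_2}\eqref{C_2(5)}: since $C^{(2)} = (0)$ it is vacuously Cohen-Macaulay — here one should instead argue that $\depth~G(\mathcal{I}) \geq d-1$ from Proposition \ref{C_2}\eqref{C_2(5)} and then bootstrap to full depth $d$ using the fact that $S = S_J(\mathcal{I})$ is free over $T/I_1T$ when $C^{(2)} = 0$ (from $(\dagger_1)$ and Lemma \ref{L1}), whence $\depth_T S = d$ and $\depth~G(\mathcal{I}) \geq d$.

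I expect the main obstacle to be the clean derivation of part $(iii)$: Proposition \ref{C_2}\eqref{C_2(5)} as stated concerns the case $C^{(2)} \neq (0)$, so one must handle the vanishing case separately. The cleanest route is to note that when $C^{(2)} = (0)$, the exact sequence $(\dagger_1)$ gives $S \cong L^{(1)} \cong (I_2/JI_1)[X_1,\ldots,X_d]$ as a graded $T$-module by Lemma \ref{L1}, which is a free $(R/I_1)[X_1,\ldots,X_d]$-module, hence $\depth_T S = d$; then the inequality $\depth~G(\mathcal{I}) \geq \depth_T S - 1 = d-1$ combined with the Hilbert series in $(i)$ — whose numerator has nonnegative coefficients, forcing $G(\mathcal{I})$ to be Cohen-Macaulay once the depth is at least $d-1$ and the $h$-polynomial matches — completes the argument. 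Alternatively, one invokes the Valabrega-Valla criterion directly: $\rm{r}_J(\mathcal{I}) \leq 2$ together with $J^n \cap I_{n+1} = J^nI_1$ for all $n$ (inductively from Lemma \ref{basic}) gives $G(\mathcal{I})$ Cohen-Macaulay. Everything else is routine bookkeeping with binomial coefficients and the structural exact sequences already in place.
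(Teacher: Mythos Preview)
Your overall strategy matches the paper's exactly: the inequality and the equivalence with $\mathrm{r}_J(\mathcal I)\le 2$ come from Proposition~\ref{C_2}\eqref{C_2(1)},\eqref{C_2(3)} together with Lemma~\ref{fact1}\eqref{fact(2)}; assertion~(i) from Proposition~\ref{C_2}\eqref{C_2(4)} with $C^{(2)}=0$; assertion~(ii) by reading off coefficients; and assertion~(iii) via the Valabrega--Valla criterion.

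Two corrections to your handling of (iii). First, the Valabrega--Valla criterion (as in \cite[Theorem~1.1]{RV10}) requires $J\cap I_{n+1}=JI_n$ for all $n\ge 0$, not $J^n\cap I_{n+1}=J^nI_1$; Lemma~\ref{basic}\eqref{basic1} is not needed. Under $\mathrm r_J(\mathcal I)\le 2$ one has $I_{n+1}=J^{n-1}I_2\subseteq J$ for $n\ge 2$, so $J\cap I_{n+1}=I_{n+1}=J\cdot J^{n-2}I_2=JI_n$, while the case $n=1$ is the hypothesis $J\cap I_2=JI_1$. Second, your alternative route does not close: from $C^{(2)}=0$ and $(\dagger_1)$ you correctly get $\depth_T S=d$ and hence $\depth\,G(\mathcal I)\ge d-1$, but ``nonnegative $h$-vector plus depth $\ge d-1$'' does \emph{not} imply Cohen--Macaulayness in general, so that step is invalid. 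Drop the alternative and keep the direct Valabrega--Valla argument, which is what the paper does.
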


We now prove an important property of $C^{(2)}$ in Proposition \ref{ChasSn} which plays a crucial role in the proof of the main result.

\begin{prop}\label{ChasSn}
Let $d \geq 3$ and $0 \leq n \leq d-1.$ Suppose that $J \cap I_2=JI_1$ and $\R(\mathcal I)$ satisfies   Serre's property $(S_n)$ as a $T$-module. Then $C^{(2)}$ also satisfies   Serre's property $(S_n)$ (as $T$-module).
\end{prop}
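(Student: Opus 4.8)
The plan is to verify Serre's condition $(S_n)$ for $C^{(2)}$ one prime at a time. Since $C^{(2)}$ is a finitely generated graded $T$-module, it is enough to check the inequality $\depth_{T_P}C^{(2)}_P \ge \min\{n,\dim_{T_P}C^{(2)}_P\}$ at the homogeneous primes $P$ of $T$. By Lemma \ref{fact1}\eqref{fact(1)} every such $P$ in $\Supp_T C^{(2)}$ contains $\p=\m T$, and since $\Ass_T C^{(2)}=\{\p\}$ by Proposition \ref{C_2}\eqref{C_2(1)} we have $\sqrt{\Ann_T C^{(2)}}=\p$, whence $\dim_{T_P}C^{(2)}_P=\height P-1$ for $P\ne\mathcal{M}$ and $\dim C^{(2)}_{\mathcal{M}}=d$. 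I would therefore split the argument into the cases $P\ne\mathcal{M}$ and $P=\mathcal{M}$, which require genuinely different tools.

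For $P\in\Supp_T C^{(2)}$ with $P\ne\mathcal{M}$, the key point is that $P\supseteq\p$ forces $T_+\nsubseteq P$ (otherwise $P\supseteq\mathcal{M}$). Hence localizing the canonical sequence $0\to\mathcal{R}(\mathcal{I})_{\geq 2}\to\mathcal{R}(\mathcal{I})\to R\oplus I_1 t\to 0$ at $P$ annihilates the cokernel (which is supported on $V(T_+)$), so $(\mathcal{R}(\mathcal{I})_{\geq 2})_P\cong\mathcal{R}(\mathcal{I})_P$. I would then localize the exact sequence $0\to I_2 T(-1)\to(\mathcal{R}(\mathcal{I})_{\geq 2})(1)\to C^{(2)}\to 0$ from the proof of Proposition \ref{C_2}\eqref{C_2(1)}. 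Because $P\ne\mathcal{M}$ lies in $\Supp_T(T/I_2 T)$, Proposition \ref{CMness} makes $(T/I_2 T)_P$ Cohen--Macaulay of dimension $\height P-1$; combined with the Cohen--Macaulayness of $T_P$ and the sequence $0\to I_2 T\to T\to T/I_2 T\to 0$, the depth lemma forces $(I_2 T)_P$ to be Cohen--Macaulay of dimension $\height P$. Writing $h=\height P$ and using that $\mathcal{R}(\mathcal{I})$ satisfies $(S_n)$ with $\dim\mathcal{R}(\mathcal{I})_P=h$, the depth lemma applied to the displayed sequence gives
\[
\depth C^{(2)}_P \;\ge\; \min\{\depth(I_2 T)_P-1,\ \depth\mathcal{R}(\mathcal{I})_P\} \;\ge\; \min\{h-1,\ \min\{n,h\}\} \;=\; \min\{n,\ h-1\},
\]
which is exactly $\min\{n,\dim C^{(2)}_P\}$.

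The remaining, harder case is $P=\mathcal{M}$, where the truncation $\mathcal{R}(\mathcal{I})_{\geq 2}$ no longer agrees with $\mathcal{R}(\mathcal{I})$ and Proposition \ref{CMness} gives no control. Here I would reduce to a single depth estimate and propagate it along short exact sequences whose auxiliary modules are honestly Cohen--Macaulay. Since $\mathcal{R}(\mathcal{I})$ satisfies $(S_n)$ and $\dim\mathcal{R}(\mathcal{I})=d+1>n$, we get $\depth_{\mathcal{M}}\mathcal{R}(\mathcal{I})\ge n$. Feeding this through $0\to\mathcal{R}(\mathcal{I})_{\geq 1}\to\mathcal{R}(\mathcal{I})\to R\to 0$ (with $R$ Cohen--Macaulay of dimension $d$ as a $T$-module) yields $\depth_{\mathcal{M}}\mathcal{R}(\mathcal{I})_{\geq 1}\ge n$; then through the Sally sequence $0\to I_1 T\to\mathcal{R}(\mathcal{I})_{\geq 1}(1)\to S\to 0$, using that $I_1 T$ is Cohen--Macaulay of dimension $d+1$ (from $0\to I_1 T\to T\to(R/I_1)[X_1,\ldots,X_d]\to 0$), gives $\depth_{\mathcal{M}}S\ge\min\{n,d\}=n$. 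Finally the sequence $(\dagger_1)$, namely $0\to L^{(1)}\to S\to C^{(2)}\to 0$, together with $L^{(1)}\cong D^{(1)}(-1)$ Cohen--Macaulay of dimension $d$ by Lemma \ref{L1}, produces $\depth_{\mathcal{M}}C^{(2)}\ge\min\{n,d-1\}=n=\min\{n,\dim C^{(2)}_{\mathcal{M}}\}$, the hypothesis $n\le d-1$ being used at each step.

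The main obstacle is precisely the maximal ideal $\mathcal{M}$: the clean argument through the sequence $0\to I_2 T(-1)\to(\mathcal{R}(\mathcal{I})_{\geq 2})(1)\to C^{(2)}\to 0$ breaks down there, both because $(T/I_2 T)_{\mathcal{M}}$ need not be Cohen--Macaulay and because the degree-$\ge 2$ truncation is no longer the full Rees module. The resolution---rerouting the depth bound through the Sally module $S$ and the degree-one truncation, where the auxiliary modules $R$, $I_1 T$, and $L^{(1)}$ are all genuinely Cohen--Macaulay---is what makes the bound $n\le d-1$ come into play, and is the step I would expect to demand the most care in tracking the depth-lemma inequalities.
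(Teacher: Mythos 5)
Your proof is correct, and its overall architecture is the same as the paper's: the same split into the cases $P\neq\mathcal{M}$ and $P=\mathcal{M}$, with the case $P\neq\mathcal{M}$ handled essentially verbatim as in the paper (localize $0\to I_2T(-1)\to\mathcal{R}(\mathcal{I})_{\geq 2}(1)\to C^{(2)}\to 0$, use Proposition \ref{CMness} and $0\to I_2T\to T\to T/I_2T\to 0$ to make $(I_2T)_P$ Cohen--Macaulay of dimension $\height~P$, identify $(\mathcal{R}(\mathcal{I})_{\geq 2})_P\cong\mathcal{R}(\mathcal{I})_P$ since $T_+\nsubseteq P$, and apply the depth lemma). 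The one genuine divergence is at $P=\mathcal{M}$: the paper obtains $\depth_T S\geq n$ by passing through the associated graded ring, namely $\depth_T\mathcal{R}(\mathcal{I})\geq n$ gives $\depth~G(\mathcal{I})\geq n-1$, which gives $\depth_T S\geq n$ via the known depth comparison between $G(\mathcal{I})$ and the Sally module in \cite{RV10}; you instead derive $\depth_{T_{\mathcal{M}}}S_{\mathcal{M}}\geq n$ directly from $\depth_{T_{\mathcal{M}}}\mathcal{R}(\mathcal{I})_{\mathcal{M}}\geq n$ by chasing the depth lemma through $0\to\mathcal{R}(\mathcal{I})_{\geq 1}\to\mathcal{R}(\mathcal{I})\to R\to 0$ and $0\to I_1T\to\mathcal{R}(\mathcal{I})_{\geq 1}(1)\to S\to 0$, using that $R\cong T/T_+$ and $I_1T$ are Cohen--Macaulay $T$-modules of dimensions $d$ and $d+1$ respectively. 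Both routes then conclude identically through the sequence $(\dagger_1)$ and the isomorphism $L^{(1)}\cong D^{(1)}(-1)$ of Lemma \ref{L1}. Your variant is slightly more self-contained, since it avoids invoking the Sally-module/graded-ring depth relation, at the modest cost of verifying Cohen--Macaulayness of the two auxiliary modules; the paper's version is shorter given the machinery of \cite{RV10} it already uses elsewhere. Your bookkeeping of the depth-lemma inequalities, of $\dim C^{(2)}_P=\height~P-1$ via $\sqrt{\Ann_T C^{(2)}}=\m T$, and of where the hypothesis $n\leq d-1$ enters is accurate throughout.
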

\begin{proof}
Consider $\mathcal M \neq P \in \Supp_T C^{(2)}.$ 
Let $h=\dim_{T_P} C^{(2)}_P$ and $r=\min\{h,n\}.$ Consider the exact sequence
 \[
  0 \to (I_2 T)_P \to T_P \to (T/I_2T)_P \to 0. 
  \]
 By Proposition \ref{CMness} $(T/I_2T)_P$ is Cohen-Macaulay $T_P$-module of dimension $h$. Since $T_P$ is Cohen-Macaulay of dimension $h+1,$ we get that $\depth_{ T_P} (I_2T)_P \geq h+1 \geq r+1.$  Since $I_1 t \nsubseteq  P ,$ 
 $(\R(\mathcal{I})/(\R(\mathcal I)_{\geq 2})_P =0.$ This implies that $(\R(\mathcal{I}))_P \simeq (\R(\mathcal I)_{\geq 2})_P. $ In particular, $\depth_{T_P} (\R(\mathcal I)_{\geq 2})_P \geq r. $ 
 Hence 
from the exact sequence 
 \[
  0 \to (I_2 T(-1))_P \to (\R(\mathcal I)_{\geq 2} (1))_P \to (C^{(2)})_P \to 0
 \]
we get that $\depth_{T_P}  (C^{(2)})_P \geq r.$

Now, let $P=\mathcal M.$ Since $\depth_T \R(\mathcal{I}) \geq n, $ $\depth_T G(\mathcal{I}) \geq n-1.$ 
This implies that $\depth_T S \geq n.$ Since $L^{(1)} \simeq (I_2/J I_1)[X_1,\ldots,X_d](-1),$ $(L^{(1)})_\mathcal{M}$ is Cohen-Macaulay $T_\mathcal{M}$-module. Hence from the exact sequence
\[
 0 \to (L^{(1)})_{\mathcal{M}} \to {S}_{\mathcal{M}} \to (C^{(2)})_{\mathcal{M}} \to 0,
\]
we get that $\depth_{T_\mathcal{M}} (C^{(2)})_{\mathcal{M}} \geq n.$ \qedhere
\end{proof}

\section{Main Theorem}
In this section we prove the main result of this paper.
In the rest of this paper, we assume that $(R,\m)$ is an analytically unramified Cohen-Macaulay local ring.
We set $\overline{C}:=C^{(2)}_J(\{\overline{I^n}\}_{n \in \Z})$ and $B:=T/\m T \cong (R/\m)[X_1,X_2,\cdots,X_d]$ the polynomial ring with $d$ indeterminates over the field $R/\m$. 

The main result of this paper is stated as follows. 

\begin{thm}\label{maintheorem}
Let $(R,\m)$ be an analytically unramified Cohen-Macaulay local ring of dimension $d>0$ and $I$ an $\m$-primary ideal in $R.$ Then following statements are equivalent:
\begin{enumerate}
\item[(1)]  $\overline{\e}_1(I)=\ov{\e}_0(I)-\ell_R(R/\ov{I})+\ell_R(\ov{I^2}/J \ov{I})+1;$
\item[(2)] $\ov{C} \simeq B(-m)$ as graded $T$-modules for some $m \geq 2;$
\item[(3)] $\ell_R(\ov{I^{m+1}}/J \ov{I^m})=1$ and $\ov{I^{n+1}}=J \ov{I^n}$ for all $2 \leq n \leq m-1$ and $n \geq m+1$ for some $m \geq 2$.
\end{enumerate}
In this case, the following assertions follow:
\begin{itemize}
\item[(i)] ${\ov{\rm{r}}}_J(I)=m+1$.
\item[(ii)] $HS_{\ov{G}(I)}(z)=\frac{\ell_R(R/I)+(\e_0(I)-\ell_R(R/\ov{I})-\ell_R(\ov{I^2}/J \ov{I}))z+\ell_R(\ov{I^2}/J \ov{I})z^2-z^m+z^{m+1}}{(1-z)^d}.$ 
\item[(iii)] $\ov{\e}_2(I)=\ell_R(\ov{I^2}/J \ov{I})+m$ and $\ov{\e}_i(I)=\binom{m}{i-1}$ for $3 \leq i \leq d.$
\item[(iv)] $\depth ~\ov{G}(I) \geq d-1.$
\item[(v)] $\ov{G}(I)$ is Cohen-Macaulay if and only if $\ov{I^3} \nsubseteq J.$ In this case, we have $m=2.$ 
\end{itemize}
\end{thm}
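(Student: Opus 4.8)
The plan is to prove the theorem by establishing the cycle of implications $(3) \Rightarrow (2) \Rightarrow (1) \Rightarrow (3)$, and then to derive the consequences $(i)$--$(v)$ from the explicit description in $(2)$ together with the machinery of Section 2. Throughout, recall that the normal filtration satisfies $J \cap \ov{I^2} = J\ov{I}$ (and more generally the hypotheses needed to apply Proposition \ref{C_2}), so $\Ass_T \ov{C} \subseteq \{\p\}$ with $\p = \m T$, and $\ov{\e}_1(I) = \ov{\e}_0(I) - \ell_R(R/\ov{I}) + \ell_R(\ov{I^2}/J\ov{I}) + \ell_{T_\p}(\ov{C}_\p)$ by Proposition \ref{C_2}\eqref{C_2(3)}. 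Hence $(1)$ is equivalent to $\ell_{T_\p}(\ov{C}_\p) = 1$.

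First, $(3) \Rightarrow (2)$: the vanishing conditions $\ov{I^{n+1}} = J\ov{I^n}$ for $2 \le n \le m-1$ and for $n \ge m+1$ say that $[\ov{C}]_n = \ov{I^{n+1}}/J^{n-1}\ov{I^2}$ is concentrated in a way that, combined with $\ell_R(\ov{I^{m+1}}/J\ov{I^m}) = 1$, forces $\ov{C}$ to be cyclic generated in degree $m$ over $B = T/\m T$; the surjection $\theta$-type argument (or directly $\m T \cdot \ov{C} = 0$, which follows since $[\ov{C}]_m$ has length one hence is killed by $\m$, and $\ov{C}$ is generated in degree $m$) gives a graded surjection $B(-m) \twoheadrightarrow \ov{C}$, and a Hilbert-function count using Proposition \ref{C_2}\eqref{C_2(2)} shows it is an isomorphism. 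For $(2) \Rightarrow (1)$: if $\ov{C} \cong B(-m)$ then $\ov{C}_\p \cong B_\p = (T/\m T)_\p$ has length $1$ over $T_\p$, so $(1)$ holds. The key and most delicate implication is $(1) \Rightarrow (3)$: here we know only $\ell_{T_\p}(\ov{C}_\p) = 1$ and must upgrade this to the global structural statement. The idea is to use Proposition \ref{ChasSn} — since for the normal filtration $\R(\ov{I})$ is known (Itoh, \cite{Itoh92}) to satisfy $(S_2)$ — to conclude $\ov{C}$ satisfies $(S_2)$; since $\ov{C}$ has a unique associated prime $\p$ of dimension $d$ and $\ell_{T_\p}(\ov{C}_\p) = 1$, the module $\ov{C}$ is a rank-one torsion-free (over $T/\m T = B$) module satisfying $(S_2)$, and one argues it must be $B(-m)$ for the unique $m$ with $[\ov{C}]_m \ne 0$ of least degree; then reading off $[\ov{C}]_n$ for all $n$ recovers exactly the equalities in $(3)$.

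For the consequences: $(i)$ follows from Lemma \ref{fact1}\eqref{fact(2)} applied to $\ell = \ov{\rm r}_J(I) - 1$, since $\ov{C} = C^{(2)} \ne 0$ forces $\ov{\rm r}_J(I) \ge 3$, and the description in $(3)$ (last nonzero piece in degree $m$) gives $\ov{\rm r}_J(I) = m+1$; one should also invoke Lemma \ref{d=1} or a direct Rees-algebra argument to see the filtration stabilizes exactly there. Statement $(ii)$ is immediate from Proposition \ref{C_2}\eqref{C_2(4)}: substitute $\mathrm{HS}_{\ov C}(z) = z^m/(1-z)^d$ (the Hilbert series of $B(-m)$) into the formula, and the factor $(1-z)$ produces the $-z^m + z^{m+1}$ numerator term. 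Statement $(iii)$ comes from comparing the numerator of $\mathrm{HS}_{\ov G(I)}(z)$ evaluated via $h$-vector/Hilbert-coefficient relations: differentiating or using $\ov{\e}_i(I) = $ (coefficient data) one reads $\ov{\e}_2(I) = \ell_R(\ov{I^2}/J\ov{I}) + m$ and $\ov{\e}_i(I) = \binom{m}{i-1}$ for $i \ge 3$ directly from the term $-z^m + z^{m+1}$ contributing $\binom{m}{i-1}$ after dividing by $(1-z)^d$ and extracting coefficients. Statement $(iv)$ is Proposition \ref{C_2}\eqref{C_2(5)}: $\ov{C} \cong B(-m)$ is a Cohen-Macaulay $T$-module (a shift of a polynomial ring), so $\depth \ov{G}(I) \ge d - 1$.

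Finally, for $(v)$: by Proposition \ref{C_2}\eqref{C_2(5)}, $\ov{G}(I)$ is Cohen-Macaulay if and only if $\ov{C}$ is a Cohen-Macaulay $T$-module, which it always is in case $(2)$; but this contradicts nothing only when $\depth \ov{G}(I) = d$, and the subtlety is that when $\depth \ov G(I) = d$ the Sally module $S$ has depth $d$ and one must check compatibility of the Hilbert series with Cohen-Macaulayness of $\ov G(I)$ — the Valabrega--Valla criterion (\cite[Theorem 1.1]{RV10}) forces $\ov{I^{n+1}} \cap J\ov{I^{n-1}}$ conditions that, together with $m \ge 2$, are satisfiable precisely when $m = 2$ and $\ov{I^3} \not\subseteq J$ (equivalently $[\ov{C}]_2 = \ov{I^3}/J\ov{I^2} \ne 0$, which is exactly $\ov{I^3} \nsubseteq J$ given $J \cap \ov{I^3} = J\ov{I^2}$ need not hold but the length-one generator sits in degree $m$); so $\ov G(I)$ CM $\iff m = 2$ and the generator of $\ov C$ in degree $2$ is detected by $\ov{I^3} \not\subseteq J$. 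I expect the main obstacle to be the implication $(1) \Rightarrow (3)$ — specifically, rigorously deducing from the numerical equality $\ell_{T_\p}(\ov C_\p) = 1$ plus the $(S_2)$ property that $\ov C$ is globally cyclic of the required form, which is where the input from Itoh's normality results and Proposition \ref{ChasSn} must be combined carefully; the consequences $(i)$--$(v)$ are comparatively routine once $(2)$ is in hand.
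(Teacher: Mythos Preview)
Your overall strategy matches the paper's: the equivalence is proved by cycling through $(1)$, $(2)$, $(3)$, with the hard step being $(1)\Rightarrow(2)$ via the $(S_2)$ property of $\ov{C}$, and consequences $(i)$--$(iv)$ read off from Proposition~\ref{C_2} once $\ov{C}\cong B(-m)$. Two genuine gaps, however, need repair.

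First, in your $(1)\Rightarrow(3)$ you assert that $\ov{C}$ is a rank-one torsion-free $(S_2)$ module over $B=T/\m T$, but you never justify that $\ov{C}$ is a $B$-module at all: you must first show $\p\ov{C}=0$. The paper does this explicitly: from $\ell_{T_\p}(\ov{C}_\p)=1$ one gets $(\p\ov{C})_\p=0$, and since $\Ass_T(\p\ov{C})\subseteq\Ass_T(\ov{C})=\{\p\}$ by Proposition~\ref{C_2}\eqref{C_2(1)}, this forces $\p\ov{C}=0$. More seriously, your appeal to Proposition~\ref{ChasSn} for the $(S_2)$ property is only valid when $d\ge 3$ (that hypothesis is built into the statement). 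For $d\le 2$ the paper argues separately: when $d=1$ use Lemma~\ref{d=1}, and when $d=2$ use that $\depth\,\ov{G}(I)\ge 1$ for the normal filtration, whence $\ov{C}$ is Cohen--Macaulay by Proposition~\ref{C_2}\eqref{C_2(5)}. (Also, the $(S_2)$ property of $\ov{\R}(I)$ is cited in the paper from \cite{Phu15}, not Itoh.) Once these points are in place, your ``rank-one reflexive over a UFD is free'' argument is a valid repackaging of the paper's ideal-embedding argument.

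Second, your treatment of $(v)$ is confused. You begin by misreading Proposition~\ref{C_2}\eqref{C_2(5)}: it says $\depth\,\ov{G}(I)\ge d-1$ iff $\ov{C}$ is Cohen--Macaulay, \emph{not} that $\ov{G}(I)$ is Cohen--Macaulay iff $\ov{C}$ is; since $\ov{C}\cong B(-m)$ is always Cohen--Macaulay here, that proposition gives only $(iv)$ and says nothing further about $(v)$. The correct argument is the one the paper gives via Valabrega--Valla directly. For one direction: if $\ov{G}(I)$ is Cohen--Macaulay and $\ov{I^3}\subseteq J$, then $\ov{I^n}\subseteq J$ for all $n\ge 3$, so the Valabrega--Valla criterion yields $\ov{I^{n+1}}=J\cap\ov{I^{n+1}}=J\ov{I^n}$ for all $n\ge 2$, hence $\ov{C}=0$, contradicting $(2)$. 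For the converse: if $\ov{I^3}\not\subseteq J$ then $J\ov{I^2}\subseteq J\cap\ov{I^3}\subsetneq\ov{I^3}$, so $[\ov{C}]_2\ne 0$ and thus $m=2$; since $\ell_R(\ov{I^3}/J\ov{I^2})=1$ this forces $J\cap\ov{I^3}=J\ov{I^2}$, and combined with $\ov{\rm r}_J(I)=3$ one gets $J\cap\ov{I^{n+1}}=J\ov{I^n}$ for all $n$, so $\ov{G}(I)$ is Cohen--Macaulay by Valabrega--Valla. Your sketch gestures at Valabrega--Valla but does not carry out either direction cleanly.
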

\begin{proof}
$(1) \Rightarrow (2):$
By Proposition \ref{C_2}\eqref{C_2(3)} we have $\ell_{T_\p}(\ov{C}_\p)=1$ where $\p=\m T$. 
Hence $\ov{C} \neq (0)$ and $(\p \ov{C})_\p=0$. 
Because $\Ass_T (\p \ov{C}) \subseteq \Ass_{T} (\ov{C}) =\{\p\}$ by Proposition \ref{C_2}\eqref{C_2(1)}, we get that $\p \ov{C}=0$.
In particular, we have $\m \ov{C}=0$ so that $\ov{C}$ is a graded $B$-module. 
Let $Q(B)$ denote the quotient field of $B.$ 
Since $\ell_{T_\p} (\ov{C}_\p)=1,$ we conclude that 
$$
\ov{C}_\p=\ov{C}_{\p B} \simeq B_\p \otimes_B \ov{C}=Q(B) \otimes_B \ov{C} \simeq Q(B). $$ 
As $\Ass_T(\ov{C})=\{\p\},$ the canonical $B$-homomorphism $\ov{C} \rightarrow \ov{C}_\p=Q(B)$ is injective. 
This gives an inclusion $\ov{C} \hookrightarrow B$ because $\ov{C}$ is a finite $B$-module.  
Therefore there exists a graded ideal $\a \subseteq B$ and an integer $m$ such that $\ov{C} \cong \mathfrak{a}(-m)$ as graded $T$-modules.
We claim that $\mathfrak{a} =B$.

Suppose $d \leq 2.$ Then $\ov{C}$ is a Cohen-Macaulay $T$-module by Proposition \ref{C_2}\eqref{C_2(5)}, because $\depth ~\overline{G}(I) \geq 1$. Hence $\ov{C}$ is Cohen-Macaulay as a $B$-module.
Therefore, $\ov{C}$ is $B$-free so that $\mathfrak{a}\cong B$.

Suppose that $d \geq 3$ and that $\a \subsetneq B.$ 
We may assume that $\height_B ~\a \geq 2.$ 
Indeed suppose that $\height_B ~\a \leq 1.$ 
Then because $\ov{C}\neq 0,$ $\height_B~ \a=1.$
Let $\{(f_1),\ldots,(f_k)\}$ be the set of all prime ideals in $B$ such that $\a \subseteq (f_i), ~i=1,\ldots,k.$ Then $\a \subseteq (f_1) \cap \ldots \cap (f_k)=(f_1 \ldots f_k). $ Write $\a=f_1^{n_1} \ldots f_k^{n_k} \a'$ for some ideal $\a'$ in $B$ and some positive integers $n_1,\ldots,n_k$ so that $\a' \nsubseteq (f_i)$ for all $i=1,\ldots,k.$ Then $\a \simeq \a'.$ If $\a'=B$ then we are done. Otherwise $\height_B ~\a' \geq 2.$ Hence we assume that $\height_B~ \a \geq 2. $ Let $\q \in \Spec~ T$ be such that $\q B \in \Ass_B ((B/\a)(-m)). $ Then $\height_B ~\q B \geq 2.$ 
By \cite[Proposition 3.1]{Phu15} $\ov{\R}(I)$ has the Serre's property $(S_2)$ (and so has the Serre's property $(S_2)$ as a $T$-module by \cite[Claim 3.5]{Phu15}). 
Hence by Proposition \ref{ChasSn}, $\ov{C}$ has the Serre's property $(S_2)$ as a $T$-module. 
Therefore $\depth_{B_\q} \ov{C}_\q=\depth_{T_\q} \ov{C}_\q \geq 2.$  Hence from the exact sequence
\[
 0 \to \ov{C}_{\q} \to B(-m)_{\q} \to ((B/\a)(-m))_{\q}\to0
\]
we get that $\depth_{B_\q} ((B/\a)(-m))_{\q} >0$ (as $B_\q$ is Cohen-Macaulay). This contradicts to the fact that $\q B_\q \in \Ass_{B_\q} ((B/\a)(-m)).$ 
Hence $\a =B.$ This proves the claim.

Moreover, since $[\ov{C}]_{m}=B(-m)_{m}=B_0 \neq 0$ and $\ov{C}_n=0$ for $n \leq 1,$ we get that $m \geq 2.$
\vskip 2mm 

$ (2) \Rightarrow (3)$: 
Since $\ov{C}_n=(0)$ for $2\leq n<m,$ $\ov{I^{n+1}}=J^{n-1} \ov{I^2}$ for $2 \leq n<m.$ Hence $\ov{I^{n+1}}=J \ov{I^n}$ for $2 \leq n <m.$ 
We have \[\ell_R(\ov{I^{m+1}}/J \ov{I^m})=\ell_R(\ov{C}_m)=\ell_R(B_0)=1.\]
Now, let $n \geq m+1$ and $x \in \ov{I^{n+1}}.$ Since $\ov{C}_n=B_{n-m}\ov{C}_m, $ there are $g_i \in J^{n-m}$ and $x_i \in \ov{I^{m+1}}$ such that $ x = g_1 x_1 + \cdots+ g_k x_k $ modulo $J^{n-1}\ov{I^2}.$ Hence $x\in J \ov{I^n}.$ Thus $\ov{I^{n+1}}=J \ov{I^n}$ for all $n \geq m+1.$ 
 
$(3) \Rightarrow (1)$:
By assumption, $\ov{C}_n=(0)$ for all $n \leq m-1$, $\ov{C}_{n+m}=B_n\ov{C}_m$ for all $n\geq 0$, and $\ell_R(\overline{C}_m)=\ell_R(\overline{I^{m+1}}/J\overline{I^{m}})=1$.
Hence there exists a surjective map
\[
 \phi:B(-m) \to \ov{C} \to 0.
\]
Let $K=\ker \phi.$ Since $e(B(-m))=1$ and $\dim_B \ov{C}=\dim_T \ov{C}=d$ by Proposition \ref{C_2}\eqref{C_2(1)}, $\dim K <d.$ Suppose $K\neq 0$ and  $\q \in \Ass_B (K).$ Then as $\q \in \Ass_B (B(-m)),$ $\dim ~(B/\q)=d.$  This implies that $\dim_B K =d,$ which is a contradiction. Hence $K=(0)$ and thus $\ov{C} \simeq B(-m).$ Therefore $\ell_{T_\p} (\ov{C}_\p)=\ell_{B_\p} (B(-m)_\p)=1.$ 
Now the assertion follows from Proposition \ref{C_2}\eqref{C_2(3)}. 

\vskip 2mm
$(i):$ Follows from the assertion (3). 

$(ii)$ and $(iii):$ By Proposition \ref{C_2}\eqref{C_2(4)} we have
\begin{eqnarray*}
{HS}_{\overline{G}(I)}(z)=\frac{\ell_R(R/\ov{I})+(\e_0(I)-\ell_R(R/\ov{I})-\ell_R(\overline{I^2}/J\overline{I}))z+\ell_R(\overline{I^2}/J\overline{I})z^2}{(1-z)^d}-(1-z)HS_{\ov{C}}(z).
\end{eqnarray*}
Since $\ov{C} \simeq B(-m),$
$$
HS_{\ov{C}}(z)=\frac{z^m}{(1-z)^d}.
$$
Thus the assertions $(ii)$ and $(iii)$ follow.

$(iv):$
Since $\ov{C} \simeq B(-m),$ $\ov{C}$ is a Cohen-Macaulay $T$-module.
Therefore by Proposition \ref{C_2}\eqref{C_2(5)} $\depth ~\ov{G}(I) \geq d-1$.

$(v):$
Let $\ov{G}(I)$ be Cohen-Macaulay. Suppose that $\ov{I^3} \subseteq J.$ 
Then $\ov{I^n} \subseteq J$ for all $n \geq 3.$ 
Hence thanks to the Valabrega-Valla criterion $($c.f. \cite[Theorem 1.1]{RV10}$)$, $\ov{I^{n+1}}=J \cap \ov{I^{n+1}}=J \ov{I^n}$ for all $n \geq 2.$ 
This implies that $\ov{\rm{r}}_J(I) \leq 2$ and hence by Lemma \ref{fact1}\eqref{fact(2)} $\ov{C}=0$ which is a contradiction. Therefore $\ov{I^3} \nsubseteq J .$ 

Let $\ov{I^3} \nsubseteq J. $ 
Then, since $J \ov{I^2} \subseteq J \cap \ov{I^3} \subsetneq \ov{I^3},$ we get that $\ov{C}_2 = \ov{I^3}/J \ov{I^2} \neq 0$ so that $m=2.$ Since $\ell_R(\ov{I^3}/J \ov{I^2})=1$ by assertion $(3)$ and $J \ov{I^2} \subseteq J \cap \ov{I^3} \subsetneq \ov{I^3},$ $J \ov{I^2} = J \cap \ov{I^3}.$ 
Because $\ov{\rm{r}}_J(I)=3$ by assertion $(i),$ $\ov{I^{n+1}}=J \ov{I^n} $ for all $n \geq 3.$ 
Therefore we have $J \cap \ov{I^{n+1}}= J \ov{I^n}$ for all $n \geq 0$.
Hence thanks to the Valabrega-Valla's criterion, $\ov{G}(I)$ is Cohen-Macaulay.
\end{proof}

The above theorem extends and reproves   \cite[Theorem 2.5]{CPR16} because $\ell_R(\ov{I^2}/J \ov{I})$ can be any integer.

\begin{cor}
 Suppose the equality $\ov{e}_1(I)=\ov{e}_0(I)-\ell_R(R/\ov{I})+\ell_R(\ov{I^2}/J \ov{I})+1$ holds true. Then $\depth ~\ov{G}(I) \geq d-1$.
\end{cor}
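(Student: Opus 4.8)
The plan is to derive the corollary as an immediate consequence of the equivalence $(1)\Leftrightarrow(2)$ in Theorem~\ref{maintheorem} together with assertion $(iv)$ of that theorem. In fact there is nothing left to prove once Theorem~\ref{maintheorem} is in hand: the hypothesis of the corollary is precisely condition $(1)$ of the theorem, and $(iv)$ asserts exactly $\depth~\ov{G}(I)\geq d-1$. So the main body of the argument consists in invoking the theorem.

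For completeness I would spell out the one-line chain: assume $\ov{\e}_1(I)=\ov{\e}_0(I)-\ell_R(R/\ov{I})+\ell_R(\ov{I^2}/J\ov{I})+1$; by the implication $(1)\Rightarrow(2)$ of Theorem~\ref{maintheorem} we have $\ov{C}\cong B(-m)$ as graded $T$-modules for some $m\geq 2$; since $B=(R/\m)[X_1,\dots,X_d]$ is a polynomial ring it is a Cohen-Macaulay $T$-module, hence so is the shift $B(-m)$, hence so is $\ov{C}=C^{(2)}_J(\{\ov{I^n}\}_{n\in\Z})$; now Proposition~\ref{C_2}\eqref{C_2(5)}, applied to the $I$-admissible filtration $\mathcal{I}=\{\ov{I^n}\}_{n\in\Z}$ (valid because $J\cap\ov{I^2}=J\ov{I}$ for the normal filtration, as noted before Proposition~\ref{C_2}), gives $\depth~\ov{G}(I)\geq d-1$ precisely because $C^{(2)}$ is Cohen-Macaulay. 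Alternatively one can just cite Theorem~\ref{maintheorem}(iv) directly, which is the most economical route since that assertion was already established inside the proof of the theorem.

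There is no real obstacle here — the corollary is a packaging statement that isolates the depth conclusion from the more detailed structural equivalences of Theorem~\ref{maintheorem}. The only thing to be careful about is making sure the hypothesis $J\cap\ov{I^2}=J\ov{I}$ needed to apply Proposition~\ref{C_2} is in force; but this holds automatically for the normal filtration in an analytically unramified Cohen-Macaulay local ring (references \cite{H87, Itoh88, HU14} cited in the text), so no extra hypothesis is required. I would therefore write the proof in a single sentence.

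\begin{proof}
By Theorem~\ref{maintheorem}, the equality in the hypothesis is equivalent to condition $(2)$ there, namely $\ov{C}\cong B(-m)$ as graded $T$-modules for some $m\geq 2$; in particular $\ov{C}$ is a Cohen-Macaulay $T$-module, since $B$ is a polynomial ring over a field. Hence, by Proposition~\ref{C_2}\eqref{C_2(5)} applied to the normal filtration $\{\ov{I^n}\}_{n\in\Z}$, we conclude $\depth~\ov{G}(I)\geq d-1$. (This is precisely Theorem~\ref{maintheorem}(iv).)
\end{proof}
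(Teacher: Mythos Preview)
Your proposal is correct and matches the paper's approach: the corollary is stated immediately after Theorem~\ref{maintheorem} with no separate proof, the intent being that it follows directly from part~(iv) of that theorem. Your expanded version, tracing through $\ov{C}\cong B(-m)$ and Proposition~\ref{C_2}\eqref{C_2(5)}, simply reproduces the argument already given for assertion~(iv) inside the proof of Theorem~\ref{maintheorem}.
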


\section{Applications and Examples}
In this section we present  some interesting consequences of Theorem \ref{maintheorem}. Next result specifies Theorem \ref{maintheorem} when the ideal $I$ is normal, that is $ \ov{I^n} =I^n $ for all $n \ge 0.$

\begin{cor}\label{normal}
Suppose that $I$ is normal.
Then the following three conditions are equivalent to each others where $\e_i(I)=\e_i(\{I^n\}_{n \in \mathbb{Z}})$ denotes the i-th Hilbert coefficient of an $I$-adic filtration:
\begin{enumerate}
 \item  $\e_1(I)=\e_0(I)-\ell_R(R/I)+\ell_R(I^2/JI)+1;$
 \item $\overline{C} \cong B(-2)$ as graded $T$-modules;
 \item $\ell_R(I^{3}/JI^2)=1$ and $I^4=JI^3$.
\end{enumerate}
When this is the case, the following assertions also follow:
\begin{itemize}
\item[(1)] $ HS_{G(I)}(z)=\frac{\ell_R(R/I)+(\e_0(I)-\ell_R(R/I)-\ell_R(I^2/JI))z+(\ell_R(I^2/JI)-1)z^2+z^{3}}{(1-z)^d}.$
\item[(2)] $\e_2(I)=\ell_R(I^2/JI)+2$ if $d \geq 2$, $\e_3(I)=1$ if $d \geq 3$, and $\e_i(I)=0$ for $4 \leq i \leq d.$
\item[(3)] $\depth ~G(I) \geq d-1.$
\item[(4)] $G(I)$ is Cohen-Macaulay if and only if $I^3 \nsubseteq J.$
\end{itemize}
\end{cor}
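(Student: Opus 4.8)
The plan is to deduce Corollary \ref{normal} directly from Theorem \ref{maintheorem} by specializing to the case where $I$ is normal, i.e. $\ov{I^n}=I^n$ for all $n\geq 0$. Under this hypothesis the normal filtration $\{\ov{I^n}\}$ coincides with the $I$-adic filtration $\{I^n\}$, so $\ov{\e}_i(I)=\e_i(I)$, $\ov{G}(I)=G(I)$, $\ov{C}=C^{(2)}_J(\{I^n\})$, and the reduction number $\ov{\rm r}_J(I)$ equals $\rm r_J(I)$. I would first record this identification in one sentence, noting in particular that $J\cap I_2=JI_1$ still holds here (it holds for the normal filtration by \cite{H87, Itoh88, HU14}, as cited before Lemma \ref{basic}), so that Proposition \ref{C_2} and Theorem \ref{maintheorem} apply verbatim with $\mathcal I=\{I^n\}$.

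Next I would invoke Theorem \ref{maintheorem} for the normal filtration. The equivalence of (1), (2), (3) in the corollary is then exactly the equivalence of (1), (2), (3) in Theorem \ref{maintheorem} once we show that the integer $m$ appearing there is forced to be $2$. The key observation is that for a normal ideal $I$, condition (3) of Theorem \ref{maintheorem} reads $\ell_R(I^{m+1}/JI^m)=1$, $I^{n+1}=JI^n$ for $2\leq n\leq m-1$ and for $n\geq m+1$. But if $m\geq 3$ then $I^3=JI^2\subseteq J$, hence $I^n\subseteq J$ for all $n\geq 3$; combined with $J\cap I^{n+1}=JI^n$ (Valabrega--Valla, \cite[Theorem 1.1]{RV10}, valid since $G(I)=\ov G(I)$ has depth $\geq d-1\geq 0$ and actually one can argue more elementarily) this would give $I^{n+1}=J\cap I^{n+1}=JI^n$ for all $n\geq 2$, forcing $\ov C=0$ by Lemma \ref{fact1}\eqref{fact(2)}, contradicting (2) of Theorem \ref{maintheorem}. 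Therefore $m=2$, which turns Theorem \ref{maintheorem}(2) into $\ov C\cong B(-2)$ and Theorem \ref{maintheorem}(3) into $\ell_R(I^3/JI^2)=1$ and $I^4=JI^3$ (the intermediate range $2\leq n\leq m-1$ being empty). This establishes the equivalence of the three conditions.

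For the supplementary assertions I would simply substitute $m=2$ into the corresponding items of Theorem \ref{maintheorem}. Item (ii) of the theorem with $m=2$ gives
\[
HS_{G(I)}(z)=\frac{\ell_R(R/I)+(\e_0(I)-\ell_R(R/I)-\ell_R(I^2/JI))z+\ell_R(I^2/JI)z^2-z^2+z^3}{(1-z)^d},
\]
which collapses to assertion (1) of the corollary after combining the two $z^2$ terms. Item (iii) with $m=2$ gives $\ov\e_2(I)=\ell_R(I^2/JI)+2$ and $\ov\e_i(I)=\binom{2}{i-1}$ for $3\leq i\leq d$; since $\binom{2}{2}=1$ and $\binom{2}{i-1}=0$ for $i-1\geq 3$, this is exactly assertion (2). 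Assertion (3) is item (iv) of the theorem verbatim. For assertion (4), item (v) of the theorem says $G(I)$ is Cohen-Macaulay if and only if $\ov{I^3}=I^3\nsubseteq J$, and in that case $m=2$ automatically — which is consistent with what we already proved, so no extra work is needed.

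The main obstacle, such as it is, is the argument that $m=2$: one must be careful that invoking the Valabrega--Valla criterion to pass from $I^n\subseteq J$ ($n\geq 3$) to $I^{n+1}=JI^n$ ($n\geq 2$) is legitimate. In fact this is precisely the mechanism used inside the proof of Theorem \ref{maintheorem}(v), so I would mirror that argument: from $I^3\subseteq J$ one gets $J\ov{I^2}\subseteq J\cap\ov{I^3}=\ov{I^3}$ with equality-forcing length considerations, concluding $\ov C_n=0$ for all $n$ and hence $\ov C=0$, contradicting $\ov C\cong B(-m)\neq 0$. Everything else is bookkeeping: re-reading each clause of Theorem \ref{maintheorem} with $\ov{I^n}$ replaced by $I^n$ and $m$ replaced by $2$.
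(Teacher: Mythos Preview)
Your overall strategy --- apply Theorem \ref{maintheorem} and show that the integer $m$ must equal $2$ --- is exactly the paper's. The gap is in your argument for $m=2$. You write that $J\cap I^{n+1}=JI^n$ follows from Valabrega--Valla because $\depth G(I)\geq d-1$; but the Valabrega--Valla criterion characterizes Cohen--Macaulayness of $G(I)$, not merely depth $\geq d-1$, so this step fails as stated. Your attempted repair by ``mirroring'' the proof of Theorem \ref{maintheorem}(v) does not work either, since that argument explicitly \emph{assumes} $\ov G(I)$ is Cohen--Macaulay in order to invoke Valabrega--Valla, a hypothesis you do not have here.

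The fix is far simpler than anything you attempt, and it is essentially the paper's argument. Since $I$ is normal, the filtration is the \emph{power} filtration, so from $I^3=JI^2$ one gets $I^{n+1}=I^{n-2}\cdot I^3=I^{n-2}\cdot JI^2=JI^n$ for every $n\geq 2$ by direct multiplication --- no Valabrega--Valla needed. Equivalently (and this is how the paper phrases it): for normal $I$, $\ov C_2=0$ already forces $\ov C=0$. The paper then runs the contrapositive: $\ov C\cong B(-m)\neq 0\Rightarrow\ov C_2\neq 0\Rightarrow [B(-m)]_2=B_{2-m}\neq 0\Rightarrow m\leq 2$. Once $m=2$ is established, your derivation of assertions (1)--(4) by substituting $m=2$ into Theorem \ref{maintheorem} is correct and matches the paper. (Note also that your reduction of the corollary's condition (3) to Theorem \ref{maintheorem}(3) with $m=2$ implicitly uses the same multiplicative fact: $I^4=JI^3$ alone gives $I^{n+1}=JI^n$ for all $n\geq 3$ only because the filtration is $I$-adic.)
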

\begin{proof}
 Since $I$ is normal, $\ov{C}=(0)$ if and only if $\ov{C}_2=(0).$ 
By Theorem \ref{maintheorem} $\e_1(I)=\e_0(I)-\ell_R(R/I)+\ell_R(I^2/JI)+1$ implies that $\ov{C} \cong B(-m)$ for some $m \geq 2.$ 
Hence $\ov{C} \neq (0)$ which implies that $\ov{C}_2 \neq (0).$ Therefore $m \leq 2.$ 
Hence $m=2.$ Now the result follows from Theorem \ref{maintheorem}.
 \end{proof}

The following example, due to Huckaba and Huneke \cite[Theorem 3.12]{HH99}, illustrates that if $I$ is normal and $\e_1(I)=\e_0(I)-\ell_R(R/I)+\ell_R(I^2/J I)+1,$ then $G(I)$ need not be Cohen-Macaulay. 

\begin{ex}\label{example}
 Let $K$ be a field of characteristic $\not= 3$ and set $R =
K[\![X,Y,Z]\!]$, where $X,Y,Z$ are indeterminates. Let $N = (X^4,
X(Y^3+Z^3), Y(Y^3+Z^3), Z(Y^3+Z^3))$ and set $I = N + {\mathfrak
m}^5$, where ${\mathfrak m}$ is the maximal ideal of $R$. The
ideal $I$ is a normal ${\mathfrak m}$-primary ideal whose
associated graded  ring ${G}(I)$ has depth $d-1$, where
$d(=3)$ is the dimension of $R$. Moreover,
\[
HS_{{G}(I)}(t)=\frac{31 + 43t +t^2 + t^3}{(1-t)^3},
\]
and hence $\ell_R(R/I)=31,~\e_0(I)=76,~ \e_1(I)=48,$ $\ell_R(I^2/JI)=2.$ Thus $\e_1(I)=\e_0(I)-\ell_R(R/I)+\ell_R(I^2/J I)+1.$
\end{ex}

It is well-known that $\ov{\e}_1(I) = \ov{\e}_0(I)-\ell_R(R/\ov{I})$ implies that $\ov{\rm{r}}_J(I) \leq 1$ for any minimal reduction $J$ of $I$ and hence $\ov{G}(I)$ is Cohen-Macaulay (c.f.\cite[Corollaries 4.8 and 4.9]{HM97}). In \cite[Theorem 2.5]{CPR16} authors proved that if $\ov{\e}_1(I) = \ov{\e}_0(I)-\ell_R(R/\ov{I})+1,$ then $\depth ~\ov{G}(I) \geq d-1.$ In the following corollary we extend their result by computing the depth of $\ov{G}(I)$ explicitly.

\begin{cor}\label{CPR}
Let $(R,\m)$ be an analytically unramified Cohen-Macaulay local ring of dimension $d>0$ and $I$ an $\m$-primary ideal in $R.$  
 Suppose that $\ov{\e}_1(I) = \ov{\e}_0(I)-\ell_R(R/\ov{I})+1,$ then $\ell_R(\ov{I^2}/J \ov{I}) \leq 1$ and 
 \[ 
 \depth~\ov{G}(I) = \begin{cases}
                     d & \mbox{ if } \ell_R(\ov{I^2}/J \ov{I}) =1 \\
                     d-1 & \mbox{ if } \ell_R(\ov{I^2}/J \ov{I}) =0.
                    \end{cases}
\] 
\end{cor}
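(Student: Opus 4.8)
The plan is to reduce Corollary~\ref{CPR} to the Main Theorem by a case analysis on the value of $\ell_R(\ov{I^2}/J\ov{I})$. First I would recall from Corollary~\ref{EV} (applied to the normal filtration, where $J\cap\ov{I^2}=J\ov{I}$ holds automatically) that
$$\ov{\e}_1(I)\ge \ov{\e}_0(I)-\ell_R(R/\ov{I})+\ell_R(\ov{I^2}/J\ov{I}).$$
Combining this with the standing hypothesis $\ov{\e}_1(I)=\ov{\e}_0(I)-\ell_R(R/\ov{I})+1$ forces $\ell_R(\ov{I^2}/J\ov{I})\le 1$, which is the first assertion.

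Next I would split into the two cases. If $\ell_R(\ov{I^2}/J\ov{I})=0$, then $\ov{I^2}=J\ov{I}$, and the hypothesis becomes $\ov{\e}_1(I)=\ov{\e}_0(I)-\ell_R(R/\ov{I})+\ell_R(\ov{I^2}/J\ov{I})+1$, which is exactly condition~(1) of Theorem~\ref{maintheorem}. By Theorem~\ref{maintheorem}(2), $\ov{C}\cong B(-m)$ for some $m\ge 2$, so $\ov C$ is a Cohen--Macaulay $T$-module but is nonzero; by Proposition~\ref{C_2}\eqref{C_2(5)} this gives $\depth\,\ov{G}(I)\ge d-1$. To see that the depth is not $d$, I would argue that $\ov{G}(I)$ Cohen--Macaulay together with $\ov{I^2}=J\ov{I}$ would force $\ov{\rm r}_J(I)\le 1$ (by the Valabrega--Valla criterion, $J\cap\ov{I^{n+1}}=J\ov{I^n}$ for all $n$, and $\ov{I^2}=J\ov I$ then propagates upward), contradicting $\ov{\e}_1(I)>\ov{\e}_0(I)-\ell_R(R/\ov{I})$; hence $\depth\,\ov{G}(I)=d-1$.

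If instead $\ell_R(\ov{I^2}/J\ov{I})=1$, then the hypothesis reads $\ov{\e}_1(I)=\ov{\e}_0(I)-\ell_R(R/\ov{I})+\ell_R(\ov{I^2}/J\ov{I})$, which by Corollary~\ref{EV} is equivalent to $\ov{\rm r}_J(I)\le 2$; in this case Corollary~\ref{EV}(iii) already tells us $\ov{G}(I)$ is Cohen--Macaulay, i.e. $\depth\,\ov{G}(I)=d$. Assembling the two cases yields the stated formula for $\depth\,\ov{G}(I)$.

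The main obstacle I anticipate is the non-Cohen--Macaulayness in the case $\ell_R(\ov{I^2}/J\ov{I})=0$: one must rule out $\depth\,\ov{G}(I)=d$ cleanly, and the cleanest route is to show that Cohen--Macaulayness of $\ov{G}(I)$ combined with $\ov{I^2}=J\ov I$ forces $\ov{\rm r}_J(I)\le 1$ and hence $\ov{C}=0$ by Lemma~\ref{fact1}\eqref{fact(2)}, contradicting $\ov{C}\cong B(-m)\ne 0$ from Theorem~\ref{maintheorem}(2). Everything else is a direct invocation of Corollary~\ref{EV}, Theorem~\ref{maintheorem}, and Proposition~\ref{C_2}.
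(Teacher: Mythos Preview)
Your proposal is correct and follows essentially the same approach as the paper: Corollary~\ref{EV} for the bound $\ell_R(\ov{I^2}/J\ov{I})\le 1$ and for the Cohen--Macaulay conclusion when this length is $1$, and Theorem~\ref{maintheorem} when the length is $0$. The only difference is in the case $\ell_R(\ov{I^2}/J\ov{I})=0$: the paper observes that $\ov{I^2}=J\ov{I}$ gives $\ov{I^3}\subseteq\ov{I^2}\subseteq J$ and then invokes Theorem~\ref{maintheorem}(v) directly to conclude $\ov{G}(I)$ is not Cohen--Macaulay, whereas you reprove that implication of (v) inline via Valabrega--Valla; similarly you cite Proposition~\ref{C_2}\eqref{C_2(5)} rather than the already-packaged Theorem~\ref{maintheorem}(iv) for $\depth\,\ov{G}(I)\ge d-1$. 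So your route is a hair longer but logically identical.
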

\begin{proof}
Let $J \subseteq I$ be a minimal reduction of $I.$ By Corollary \ref{EV}  $\ell_R(\ov{I^2}/J \ov{I}) \leq 1.$ 
If $\ell_R(\ov{I^2}/J \ov{I}) =1 ,$ then by Corollary \ref{EV} $\ov{\rm{r}}_J(I) \leq 2$ and $\ov{G}(I)$ is Cohen-Macaulay. 

Suppose $\ell_R(\ov{I^2}/J \ov{I}) =0,$ that is $\ov{I^2}=J \ov{I}.$ Then $\ov{I^3} \subseteq \ov{I^2} \subseteq J.$ Hence by Theorem \ref{maintheorem} $\depth~\ov{G}(I) = d-1.$ \qedhere
 \end{proof}

In the following theorem we analyze the case $\ov{e}_2(I) \leq \ell_R(\ov{I^2}/J \ov{I})+ 2$ (in particular, $\ov{e}_2(I) \leq 2$).

\begin{cor}\label{e2}
 Let $(R,\m)$ be an analytically unramified Cohen-Macaulay local ring of dimension $d>0$ and $I$ an $\m$-primary ideal in $R.$ If $\ov{e}_2(I) \leq \ell_R(\ov{I^2}/J \ov{I})+ 2,$ then $\depth~\ov{G}(I) \geq d-1.$  
\end{cor}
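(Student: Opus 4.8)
The goal is to show that the bound $\ov{e}_2(I) \leq \ell_R(\ov{I^2}/J\ov{I})+2$ forces $\depth~\ov{G}(I) \geq d-1$. The natural strategy is to use the numerical characterization from Theorem~\ref{maintheorem} together with the already established Corollary~\ref{EV}, reducing everything to a statement about $\ell_{T_\p}(\ov{C}_\p)$ where $\p = \m T$. Recall from Proposition~\ref{C_2}\eqref{C_2(3)} that $\ov{e}_1(I) = \ov{e}_0(I) - \ell_R(R/\ov{I}) + \ell_R(\ov{I^2}/J\ov{I}) + \ell_{T_\p}(\ov{C}_\p)$, and from Corollary~\ref{EV} that $\ell_{T_\p}(\ov{C}_\p) = 0$ (equivalently $\ov{C} = 0$, equivalently $\ov{\rm{r}}_J(I) \leq 2$) is precisely the case $\ov{e}_1(I) = \ov{e}_0(I) - \ell_R(R/\ov{I}) + \ell_R(\ov{I^2}/J\ov{I})$, which also gives $\ov{G}(I)$ Cohen-Macaulay.

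\textbf{Key steps.} First I would dispose of the case $\ell_{T_\p}(\ov{C}_\p) = 0$: here $\ov{C} = 0$, so $\ov{\rm{r}}_J(I) \leq 2$ and $\ov{G}(I)$ is Cohen-Macaulay by Corollary~\ref{EV}, hence $\depth~\ov{G}(I) = d \geq d-1$. Next, suppose $\ell_{T_\p}(\ov{C}_\p) \geq 1$. I want to show the hypothesis $\ov{e}_2(I) \leq \ell_R(\ov{I^2}/J\ov{I}) + 2$ forces $\ell_{T_\p}(\ov{C}_\p) = 1$, which by Theorem~\ref{maintheorem} ((1)$\Leftrightarrow$(2)) gives $\ov{C} \simeq B(-m)$ for some $m \geq 2$, and then assertion (iv) of that theorem yields $\depth~\ov{G}(I) \geq d-1$. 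To get the constraint on $\ell_{T_\p}(\ov{C}_\p)$, I would compute $\ov{e}_2(I)$ from the Hilbert series in Proposition~\ref{C_2}\eqref{C_2(4)}. Extracting the coefficient of $\binom{n+d-3}{d-3}$ (i.e.\ reading off $\ov{e}_2$) from
$$
\mathrm{HS}_{\ov{G}(I)}(z) = \frac{\ell_R(R/\ov{I}) + (\ov{e}_0(I) - \ell_R(R/\ov{I}) - \ell_R(\ov{I^2}/J\ov{I}))z + \ell_R(\ov{I^2}/J\ov{I})z^2}{(1-z)^d} - (1-z)\mathrm{HS}_{\ov{C}}(z),
$$
one finds $\ov{e}_2(I) = \ell_R(\ov{I^2}/J\ov{I}) + \sum_{n \geq 2} n \cdot \ell_R([\ov{C}]_n) - \binom{?}{?}\cdots$; more precisely, writing $\mathrm{HS}_{\ov{C}}(z) = \sum_n \ell_R([\ov{C}]_n) z^n$, the contribution $-(1-z)\mathrm{HS}_{\ov{C}}(z)$ to the Hilbert--Samuel polynomial is handled via $\dim_T \ov{C} = d$ (Proposition~\ref{C_2}\eqref{C_2(1)}), so that $\ov{C}$ contributes a genuine degree-$(d-1)$ (or lower) term; evaluating, $\ov{e}_2(I) = \ell_R(\ov{I^2}/J\ov{I}) + (\text{length-type invariant of }\ov{C}) \geq \ell_R(\ov{I^2}/J\ov{I}) + 2\ell_{T_\p}(\ov{C}_\p)$ when $\ov{C} \neq 0$, since each nonzero graded piece past degree $2$ contributes at least its degree. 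Combined with the hypothesis this forces $\ell_{T_\p}(\ov{C}_\p) = 1$.

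\textbf{Main obstacle.} The delicate point is pinning down exactly how $\ov{C}$ contributes to $\ov{e}_2(I)$ and establishing the inequality $\ov{e}_2(I) \geq \ell_R(\ov{I^2}/J\ov{I}) + 2\,\ell_{T_\p}(\ov{C}_\p)$ (or the analogous sharp bound). Since $\dim_T \ov{C} = d$, the module $\ov{C}$ has a Hilbert polynomial of degree $d-1$ with leading term governed by $\ell_{T_\p}(\ov{C}_\p) = e_0^T(\ov{C})$, and since $[\ov{C}]_n = 0$ for $n \leq 1$, the ``postulation number'' shift means $\ov{e}_2$ picks up both this multiplicity and a positivity contribution from the lowest nonvanishing degree being $\geq 2$. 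I would make this precise by the standard dévissage: filter $\ov{C}$ so that $\ov{e}_2(\ov{C}\text{-part}) = \sum_{j} (\text{shift}_j) \cdot e_0(B\text{-factor}_j) \geq 2 e_0^T(\ov{C})$, with equality analysis showing equality can only happen when $\ov{C}$ is cyclic over $B$ generated in degree exactly $2$, i.e.\ $m = 2$ and $\ell_{T_\p}(\ov{C}_\p) = 1$. Once $\ell_{T_\p}(\ov{C}_\p) = 1$ is secured, the rest is immediate from Theorem~\ref{maintheorem}. An alternative, perhaps cleaner, route: observe directly that the hypothesis rules out $\ell_{T_\p}(\ov{C}_\p) \geq 2$ by the positivity of the higher normal Hilbert coefficients (Itoh's theorem, $\ov{e}_i(I) \geq 0$, combined with the $S_2$ property of $\ov{\R}(I)$ from \cite{Phu15}), and then apply the characterization — I would try both and present whichever gives the shortest argument.
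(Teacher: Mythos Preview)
Your overall strategy --- reduce to $\ell_{T_\p}(\ov{C}_\p)\in\{0,1\}$ and then invoke Corollary~\ref{EV} or Theorem~\ref{maintheorem} --- is the right endgame, but the route you take to get there is much harder than necessary, and you have not actually closed the gap you yourself flag as the ``main obstacle.''

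The paper's argument bypasses all Hilbert-series bookkeeping for $\ov{C}$ and uses Itoh's chain inequality \cite[Theorem~2]{Itoh92} directly:
\[
\ov{e}_2(I)\ \geq\ \ov{e}_1(I)-\ov{e}_0(I)+\ell_R(R/\ov{I})\ \geq\ \ell_R(\ov{I^2}/J\ov{I}),
\]
together with the fact that equality in \emph{either} place forces $\ov{I^{n+2}}=J^n\ov{I^2}$ for all $n\ge 0$, hence $\ov{G}(I)$ is Cohen--Macaulay. Under the hypothesis $\ov{e}_2(I)\le \ell_R(\ov{I^2}/J\ov{I})+2$, the only case not already handled by an equality is when both inequalities are strict and the total gap equals $2$; then the middle term is exactly $\ell_R(\ov{I^2}/J\ov{I})+1$, i.e.\ $\ov{e}_1(I)=\ov{e}_0(I)-\ell_R(R/\ov{I})+\ell_R(\ov{I^2}/J\ov{I})+1$, and Theorem~\ref{maintheorem}(iv) gives $\depth\,\ov{G}(I)\ge d-1$.

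Your proposed inequality $\ov{e}_2(I)\ge \ell_R(\ov{I^2}/J\ov{I})+2\,\ell_{T_\p}(\ov{C}_\p)$ happens to be true for the normal filtration, but the d\'evissage you sketch does not prove it as stated. To make it work you would have to filter $\ov{C}$ by the modules $C^{(\ell)}$ and identify each graded piece via $L^{(\ell)}\cong D^{(\ell)}(-\ell)$ (Lemma~\ref{L1}) for \emph{every} $\ell\ge 2$; this requires $J\cap\ov{I^{\ell+1}}=J\ov{I^\ell}$ for all $\ell$, a substantial Itoh-type input you never invoke. Your ``alternative route'' gestures toward Itoh but names the wrong statement: neither the bare nonnegativity $\ov{e}_i\ge 0$ nor the $(S_2)$ property of $\ov{\mathcal R}(I)$ is what rules out $\ell_{T_\p}(\ov{C}_\p)\ge 2$ here; it is precisely the chain inequality above, with its equality characterization, that does the work.
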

\begin{proof}
By \cite[Theorem 2]{Itoh92}
\begin{equation}
\label{Eqn:e2inequality}
\ov{e}_2(I) \geq \ov{e}_1(I) -\ov{e}_0(I) + \ell_R (R/\ov{I}) \geq \ell_R(\ov{I^2}/J \ov{I})
\end{equation}
and either of the equality in \eqref{Eqn:e2inequality} implies that $\ov{I^{n+2}}=J^n \ov{I^2}$ for all $n \geq 0$ (thus the other equality in \eqref{Eqn:e2inequality}) and hence $\ov{G}(I)$ is Cohen-Macaulay by \cite[Proposition 3]{Itoh88}. Therefore it is enough to consider the case $\ov{e}_2(I)=\ell_R(\ov{I^2}/J \ov{I})+2.$ In this case, each inequality in \eqref{Eqn:e2inequality} is strict. Therefore $\ov{e}_1(I)=\ov{e}_0(I) + \ell_R (R/\ov{I})+\ell_R(\ov{I^2}/J \ov{I})+1$ holds true. Hence by Theorem \ref{maintheorem} $\depth~\ov{G}(I) \geq d-1.$ 
\end{proof}


\end{document}